\newtheorem{theorem}{Theorem}
\newtheorem{proposition}[theorem]{Proposition}
\newtheorem{lemma}[theorem]{Lemma}
\newtheorem{cor}[theorem]{Corollary}
\newtheorem{definition}[theorem]{Definition}
\numberwithin{equation}{section}
\numberwithin{theorem}{section}
\crefname{lemma}{Lemma}{Lemmas}
\crefname{cor}{Corollary}{Corollaries}
\crefname{proposition}{Proposition}{Propositions}
\crefname{definition}{Definition}{Definitions}
\def\l{\left(}
\def\r{\right)}
\def\LGU{LGI}
\def\archi{\textbf{(d,k,s)}}
\def\Loss{\ensuremath{\mathcal{L}_{X,Y}^{\archi}}}
\def\Losskurz{\ensuremath{\mathcal{L}}} 
\def\LosskurzN{\ensuremath{\Losskurz\l \wall\r} }
\def\wNs{\ensuremath{w^{(1)},\dots,w^{(N)}} }
\def\wNsv#1{\ensuremath{w^{(1)}(#1),\dots,w^{(N)}(#1)} }
\def\wi{\ensuremath{w^{(i)}} }
\def\Wi{\ensuremath{W^{(i)}} }
\def\WI#1{\ensuremath{W^{(#1)}} }
\def\wI#1{\ensuremath{w^{(#1)}}} 
\def\w{\ensuremath{v} }
\def\v{\ensuremath{\w} }
\def\W{\ensuremath{V} }
\def\V{\ensuremath{\W} }
\def\k{\ensuremath{k_{v}} }
\def\s{\ensuremath{s_{v}}}
\newcommand{\triple}[1]{|\!|\!| #1 |\!|\!|}
\newcommand{\bigtriple}[1]{\Big|\!\Big|\!\Big| #1 \Big|\!\Big|\!\Big|}
\def\ksum{\ensuremath{\bar{k}}}
\def\wall{\ensuremath{\vec{w}} }
\def\wallt{\ensuremath{\vec{w}(t)}}
\def\dif#1{\frac{d}{d #1}}
\def\Tildepi{\Tilde{p}_{i}}
\def\R{\mathbb{R}}
\def\N{\mathbb{N}}
\def\C{\mathbb{C}}
\newcommand{\ul}[1]{[#1]}
\begin{document}

\title[Convergence of gradient flow for learning convolutional neural networks]{Convergence of gradient flow for learning convolutional neural networks}
\author{Jona-Maria Diederen$^1$}
\address{$^1$aiXbrain GmbH, Aachen, Germany}
\email{Jona.diederen@aixbrain.de}
\author{Holger Rauhut$^{2,3}$}
\author{Ulrich Terstiege$^{2,3}$}
\address{$^2$Department of Mathematics, Ludwig-Maximilians-Universität, Theresienstr.\ 39, 80333 München, Germany}
\address{$^3$Munich Center for Machine Learning, Ludwig-Maximilians-Universität München, Germany}
\email{rauhut@math.lmu.de}
\email{terstiege@math.lmu.de}
\date{\today}

\begin{abstract}
Convolutional neural networks are widely used in imaging and image recognition. Learning such networks from training data leads to the minimization of a non-convex function. This makes the analysis of standard optimization methods such as variants of (stochastic) gradient descent challenging. In this article we study the simplified setting of linear convolutional networks. We show that the gradient flow (to be interpreted as an abstraction of gradient descent) applied to the empirical risk defined via certain loss functions including the square loss always converges to a critical point, under a mild condition on the training data.
\end{abstract}

\maketitle
\section{Introduction}
Convolutional neural networks represent a powerful machine learning architecture that has proven very successful for various machine vision applications such as image recognition or medical imaging. For adapting a convolutional neural network to a particular task based on training data, one usually aims at minimizing a suitable empirical risk functional over the parameters of the network. Variants of (stochastic) gradient descent algorithms are then used for the practical optimization. Due to the non-convexity and high-dimensionality of this problem it is challenging to derive convergence guarantees for such algorithms.

In order to make theoretical progress, convergence properties for simplified models have been studied recently, most notably for \underline{linear} neural networks \cite{arora2018optimization,BRTW,Geo,Kohn2,ngraul24,saxe2013exact}. While linear networks are not powerful enough for most practical purposes due to limited expressivity (they can only represent linear functions), the convergence behaviour of gradient descent algorithms for learning them is still highly non-trivial because of the non-convexity of the corresponding empirical risk functional. Insights gained on linear models may inform theory for learning nonlinear neural networks.

In \cite{BRTW,ngraul24,arora2018optimization} gradient flow (GF) and gradient descent (GD) for learning linear \underline{fully connected} networks has been studied for the square loss. It is shown in \cite{BRTW,ngraul24} that GF and, under an upper bound on the step sizes, GD always converges to a critical point of the empirical risk. For almost all initialization, both (essentially) converge to a global minimizer, see \cite{BRTW,ngraul24} for the exact statements.
Additionally, \cite{BRTW} establishes that the corresponding flow of the network (the product of the individual weight matrices) can be interpreted as a Riemannian gradient flow.

The goal of this article is to extend convergence theory from linear fully connected neural networks to linear convolutional networks (LCN). 
In \cite{Geo} 
a mathematical formalism for LCNs was introduced. It was shown that functions represented by LCNs can be interpreted as certain polynomials and critical points of the loss landscape (using the square loss) were studied. In \cite{ackora25} the gradient flow for learning LCNs was studied and similarly to the fully connected case \cite{BRTW}, it was shown that the flow of function represented by the LCN can be interpreted as a Riemannian gradient flow. The main result of our article establishes that, under a mild non-degeneracy condition on the training data, this gradient flow always converges to a critical point of the empirical risk, defined via certain loss functions including the square loss.


\subsection{Linear Convolutional Networks}
\label{subsec:LCN}

A \textit{neural network} $f:\R^{d_0}\rightarrow\R^{d_N}$ of depth $N$ is  a composition of $N$ so-called \textit{layers}, each consisting of an affine function $\alpha_i: \R^{d_{i-1}}\rightarrow\R^{d_i}$, $\alpha_i(x) = \Wi x + b_i$, followed by a activation function $\sigma:\R \rightarrow\R$, which acts componentwise on a vector. 
Thus we can write 
$$
f(x)=\sigma\circ\alpha_N\circ\sigma\circ\alpha_{N-1}\circ\dots \circ \sigma\circ\alpha_{2}\circ\sigma\circ\alpha_1(x).
$$
Choosing the activation function $\sigma$ to be the identity and the bias vectors $b_i = 0$, $i\in \ul{N}:=\{1,\ldots,N\}$, results in a \textbf{linear} neural network 
of the form
\begin{align*} f(x)&=\alpha_N\circ \circ\alpha_{N-1}\circ\dots \circ \circ\alpha_{2}\circ\alpha_1(x) = \WI{N}\cdot\cdots\cdot \WI{1}x = \W x 
\end{align*} 
where $\W = \WI{N}\cdot\ldots\cdot \WI{1} \in \R^{d_N \times d_0}$ represents the product weight matrix representing the neural network as a function.


A \textit{linear convolutional network} (LCN) is a special case of a linear neural network, where each weight matrix
$\WI{i}$ represents a convolutional mapping. 
Such convolutions encode mappings that are equivariant with respect to translations and therefore allow effective processing of images. Since a convolution can be represented by a short vector, much fewer parameters have to be learned from training data, reducing the amount of required data.

We now introduce some further notions on LCNs closely following \cite{Geo}.
A convolutional layer $\alpha_i: \R^{d_{i-1}} \to \R^{d_i}$ is commonly defined in a machine learning context as
\begin{align*} &\qquad &(\alpha_i(x))_j=\sum_{n\in\ul{k_i}}\wi_n\cdot x_{(j-1)\cdot s_i+n} & &\text{for } j\in\ul{d_{i}}, \; x \in \R^{d_i}
\end{align*} 
with \textit{filter} $\wi=(\wi_{1},\wi_{2},\dots , \wi_{k_i})\in\R^{k_i}$, \textit{width} $k_i\in\N$ and \textit{stride} $s_i\in\N$. 

It is helpful to convert the affine mappings into matrix notation. Let us consider the example of a convolution $\alpha:\R^{8}\to\R^{3}$ with filter $w=(w_1,w_2,w_3,w_4) $ of width $k=4$ and stride $s=2$. Then $\alpha(x)=Wx$ with 
\begin{align} 
\label{eq_ExampleConvMa} 
W=\left(
\begin{array}{ccccccccc} w_1 & w_2 & w_3 & w_4 & & &  &   \\ & & w_1 & w_2 & w_3 & w_4 & &  \\  & & & & w_1 & w_2 & w_3 & w_4 \end{array}
\right)\in\R^{3\times 8}. 
\end{align} 
We call such a matrix \textit{convolutional matrix} or say that $W$ is convolutional. Let $\Pi_{(d_1,d_2),k,s}:\R^k\to\R^{d_2\times d_1}, w\mapsto W$ be the parameterization mapping that maps a filter $w$ of width $k$ and step size $s$ to the corresponding convolution matrix $W$, analogously to \eqref{eq_ExampleConvMa}. We restrict ourselves to convolutions without padding, i.e., we assume that the filter $w$ can be applied to $x$ without exceeding $x$. To ensure this, it is always assumed in the sequel that for $x\in\R^{d_0}$ and $\alpha_i :\R^{d_{i-1}}\to\R^{d_i}$ that 
\begin{align} 
\label{eq_dldlMinus1} 
d_i=\frac{d_{i-1}-k_i}{s_i}+1,\qquad i\in \ul{N} . 
\end{align} 
We call $(\textbf{d},\textbf{k},\textbf{s}): =\left((d_0,\ldots , d_N),(k_1,\ldots , k_N),(s_1,\ldots , s_N)\right)$  the architecture of the network (architecture for short) of depth $N$ if $(\textbf{d},\textbf{k},\textbf{s})$ satisfies  \eqref{eq_dldlMinus1}. Note that due to this equation, an architecture is uniquely defined by $(\textbf{d}_0,\textbf{k},\textbf{s})$.
The set of all possible LCNs with respect to a fixed architecture \archi{} is characterized by the set of matrices 
$$ \mathcal{M}_{\archi}=\left\{ W\in\R^{d_N\times d_0}\,\Big|\, W=\prod_{i=1}^N \Pi_{(d_{i-1},d_{i}),k_i,s_i}(\wi), \;  \wi\in \R^{k_i}, i \in \ul{N} \right\}. $$ 
We collect all filter vectors of an LCN in a vector $\wall:=\begin{pmatrix} w^{(1)} \\ \vdots \\ w^{(N)} \end{pmatrix}\in \R^{\sum_{i=1}^{N}k_i}$  and  define the mapping 
\begin{align*} \Pi_{\archi}:\R^{\sum_{i=1}^{N}k_i}\to\mathcal{M}_{\archi},\, (\wNs)\mapsto \prod_{i=1}^N \Pi_{(d_{i-1},d_{i}),k_i,s_i}(\wi).
\end{align*} 
The next proposition shows that a LCN is again a convolution and thus in particular all elements in $\mathcal{M}_{\archi}$ are convolutional. 

\begin{proposition}[{{\cite[Proposition 2.2]{Geo}}}]
\label{th_CompConv}
The composition of $N$ convolutions $\alpha_1,\ldots,\alpha_N$ with filter sizes $\textbf{k}=(k_1,\ldots , k_N)$ and strides $\textbf{s}=(s_1,\ldots , s_N)$ is a convolution $\alpha_v$ with stride $\s$ and filter width $\k$, and filter given by
\begin{align}
    \s & =\prod_{i=1}^N s_i \quad \text{and} \quad \k=k_1+\sum_{i=2}^N(k_i-1)\prod_{m=1}^{i-1}s_m. 
    \notag
\end{align} 
\end{proposition}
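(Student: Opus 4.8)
The plan is to prove the statement by induction on the depth $N$, with the entire computational content concentrated in a single reduction: composing a convolution with one further convolution. The base case $N=1$ is immediate, since then $\s=s_1$ and $\k=k_1$. For the inductive step, suppose that $\beta:=\alpha_{N-1}\circ\cdots\circ\alpha_1$ is already known to be a convolution, say with stride $S=\prod_{i=1}^{N-1}s_i$, width $K=k_1+\sum_{i=2}^{N-1}(k_i-1)\prod_{m=1}^{i-1}s_m$, and some filter $b=(b_1,\dots,b_K)$. It then suffices to analyze $\alpha_N\circ\beta$, where $\alpha_N$ has filter $a=(a_1,\dots,a_{k_N})$ and stride $s_N$, and to show that the result is again convolutional with the predicted stride and width.

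The heart of the argument is a direct index substitution. Writing $y=\beta(x)$, so that $y_j=\sum_{m=1}^{K}b_m\,x_{(j-1)S+m}$, and then applying $\alpha_N$, one computes
\begin{align*}
(\alpha_N\circ\beta)(x)_i
&= \sum_{n=1}^{k_N} a_n\, y_{(i-1)s_N+n}
 = \sum_{n=1}^{k_N}\sum_{m=1}^{K} a_n b_m\, x_{((i-1)s_N+n-1)S+m} \\
&= \sum_{n=1}^{k_N}\sum_{m=1}^{K} a_n b_m\, x_{(i-1)s_N S+(n-1)S+m}.
\end{align*}
The crucial observation is that, after setting $\ell:=(n-1)S+m$, the coefficient multiplying $x_{(i-1)s_N S+\ell}$ equals $\sum_{(n,m):(n-1)S+m=\ell} a_n b_m$, which does \emph{not} depend on the output index $i$. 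This is precisely the translation-equivariance characterizing a convolution: $\alpha_N\circ\beta$ is the convolution with stride $s_N S$ and filter $c$ given by $c_\ell=\sum_{(n,m):(n-1)S+m=\ell} a_n b_m$, where entries of $c$ not hit by any admissible pair $(n,m)$ are simply zero.

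It remains to read off the two closed forms and check that they match. The stride is $s_N S=s_N\prod_{i=1}^{N-1}s_i=\prod_{i=1}^{N}s_i=\s$, as claimed. The width is the maximal index $\ell=(n-1)S+m$, attained at $n=k_N,\ m=K$, namely $(k_N-1)S+K$; substituting the inductive expressions for $S$ and $K$ and noting that $(k_N-1)\prod_{m=1}^{N-1}s_m$ is exactly the $i=N$ summand converts this into $k_1+\sum_{i=2}^{N}(k_i-1)\prod_{m=1}^{i-1}s_m=\k$. As a consistency check one verifies, using \eqref{eq_dldlMinus1} for $\alpha_N$ together with the induction hypothesis $d_{N-1}=(d_0-K)/S+1$, that the triple $(d_0,\k,\s)$ again satisfies the no-padding relation $d_N=(d_0-\k)/\s+1$, so the claimed convolution has the correct domain and codomain.

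The index bookkeeping is the only laborious part, and the single genuinely substantive point — the one I would state most carefully — is the coefficient-independence from $i$, i.e.\ that the double sum collapses into one stride-$s_N S$ convolution; this is what upgrades ``the composition is some linear map'' to ``the composition is convolutional.'' I expect the main risk of error to be off-by-one slips arising from the $1$-indexed convention $(j-1)s+n$, so I would fix the indexing conventions at the outset and track the shift $(i-1)s_N\mapsto(i-1)s_N S$ explicitly. (A more conceptual alternative, in the spirit of the polynomial description of \cite{Geo}, would encode each filter as a polynomial and realize stride composition as multiplication together with a variable substitution $z\mapsto z^{S}$; this reproduces the same stride and width, but the elementary substitution above is the most self-contained route.)
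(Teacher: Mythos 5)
Your proof is correct and follows essentially the same route as the paper's own proof in the appendix: induction on $N$, with the inductive step carried out by exactly the same index substitution $((i-1)s_N+n-1)S+m=(i-1)s_NS+(n-1)S+m$ and the same collapsing of the double sum into a single filter $c_\ell=\sum_{(n,m):(n-1)S+m=\ell}a_nb_m$ (the paper's equation \eqref{eq_endfilter}). Your added remarks (explicit independence of the coefficient from the output index, zero filler entries, and the dimension consistency check) are fine but do not change the argument.
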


For completeness and later reference, we include the proof from \cite{Geo} in the appendix. 

Using matrix notation for convolutions, we obtain
\begin{align*}
    \alpha_N \circ \ldots \circ \alpha_1 (x)= \prod_{i=1}^N \Pi_{(d_{i-1},d_{i}),k_i,s_i}(\wi)\cdot x =: \Pi_{\archi}(\wall) \cdot x.
\end{align*}
By \Cref{th_CompConv}, the matrix $\W:=\Pi_{\archi}(\wall)$ is (again) convolutional. The filter vector $\w$ belonging to $\V$ is called the final filter of the LCN. The mapping that maps $\wall$ directly to the final filter $v$ is called $\pi_{\archi}$, more precisely 
$${\displaystyle\pi_{\archi}:\R^{\sum_{i=1}^{N}k_i}\to\R^{\k}} \text{ with }v=\pi_{\archi}(\wall)$$ and $\k$ according to \Cref{th_CompConv}.\medskip

For an architecture \archi, the image of  $\pi_{\archi}$ is in general not  the whole space $\R^{\k}$, i.e., $\mathcal{M}_{\archi}\not=\mathcal{M}_{\l \l d_0,d_N\r,\k,\s\r }$. Architectures \archi{} with $\mathcal{M}_{\archi}=\mathcal{M}_{\l \l d_0,d_N\r,\k,\s\r }$ are referred to as \textit{filling architecture} in \cite{Geo}. According to \cite[Theorem 4.1]{Geo}, examples of \textit{filling architectures} are architectures \archi{} with $s=(1,\ldots,1)$ and $k=\l k_1\ldots,k_N\r$, where at most one $k_i$ is even. Consequently, 
for filling architectures it is often sufficient to consider the set $\mathcal{M}_{\l \l d_0,d_N\r,\k,\s\r }$. But we note that since, for $i,j\in\ul{N}$ with $ i\not =j$ and $\kappa\not=0$,
\begin{align*}
    \pi_{\archi}(\wall)=\pi_{\archi}\l\l \wI{1}, \ldots, \frac{1}{\kappa}\wi,\ldots,\kappa \wI{j},\ldots\r\r, 
\end{align*} 
  the vector $\wall\in\R^{\sum_{i=1}^{N}k_i}$ is not uniquely determined by the vector $v=\pi_{\archi}(\wall)$.


\subsection{Learning via gradient flow}
\label{sec_Learning-Task}
The task of learning consists in adapting the weights of a neural network to data. Given training labeled data $(x_{1},y_{1}),\ldots, (y_{m},x_{m})\in\R^{d_x} \times \R^{d_y}$ 
In our setup, we would like to find a LCN $f$ parameterized by filters $\wall$ such that $f(x_{j})\approx y_{j}$ for $j\in\ul{m}$. Given a (differentiable) loss function $\ell:\R^{d_y}\times \R^{d_y}\to\R$ and an architecture \archi{} of depth $N$ with $d_x=d_0$ and $d_y=d_N$, we introduce the empirical risk function
\begin{align*}
    \Loss: \R^{\sum_{i=1}^{N} k_i}\to \R,\wall\mapsto \sum_{i=1}^{m}\ell\left(\Pi_{\archi}(\wall)\cdot x_i,y_i\right),
\end{align*} 
where we use the shorthand notation $X=(x_1,\dots,x_m)\in\R^{d_0\times m}$ and $Y=(y_1,\dots,y_m)\in\R^{d_N\times m}$ for the data matrices.

The learning task can then be formulated as finding a (global) minimizer of the problem
\begin{align}
\label{eq_OptiProb2}
    \min_{ \wall\in\R^{\sum_{i=1}^{N}k_i}}\Loss(\wall).
\end{align}

In practice, variants of (stochastic) gradient descent are usually used for this minimization problem. For the purpose of a simplified mathematical analysis, however, it is common to use the abstraction of gradient flow. Then gradient descent can be viewed as an Euler discretization of the flow. In mathematical terms, we will focus on the gradient flow, i.e., the initial value problem 
\begin{align}
\label{eq_GradSys1}
    \frac{d\wall(t)}{dt}=-\nabla\Loss\l\wall(t)\r, \quad \wall(0) = \wall_0. 
\end{align}

\subsection{Main result}

Due to the non-convexity of the empirical risk $\Loss$ in the case of multiple layers, $N\geq 2$, convergence properties of the gradient flow are non-trivial. We establish that under certain assumptions on the loss function $\ell$, the gradient flow always converges to a critical point of $\Loss$. For simplicity of the exposition, we state our result here for the special case of the square loss
\begin{equation}\label{def:square-loss}
\ell : \R^{d_y} \times \R^{d_y} \to \R, \quad \ell(y,z) = \|y-z\|_2^2.
\end{equation}

\begin{theorem} For the square loss \eqref{def:square-loss} and arbitrary initialization $\wall_0$, the gradient flow $\wall(t)$ defined in \eqref{eq_GradSys1} converges to a critical point of $\Loss$ as $t \to \infty $.
\end{theorem}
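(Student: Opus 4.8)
The plan is to run the classical Łojasiewicz strategy for gradient flows of real-analytic functions: show that $\Loss$ is analytic and decreasing along the flow, show that the trajectory $\wallt$ stays bounded, and then use the Łojasiewicz gradient inequality to upgrade boundedness into convergence to a single critical point. First I would record that $\Loss$ is a polynomial in $\wall$, hence real-analytic: the parameterization $\Pi_{\archi}$ is multilinear in the filters $w^{(1)},\dots,w^{(N)}$ (each convolution matrix depends linearly on its filter), so every entry of $\Pi_{\archi}(\wall)$ is a polynomial of degree $N$ in the entries of $\wall$, and composition with the quadratic square loss yields a polynomial of degree $2N$. Since $\frac{d}{dt}\Loss(\wallt)=-\|\nabla\Loss(\wallt)\|^2\le 0$ and $\Loss\ge 0$, the value $\Loss(\wallt)$ decreases to some limit $\mathcal{L}_\infty$ and $\int_0^\infty\|\nabla\Loss(\wallt)\|^2\,dt<\infty$.

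The heart of the argument is boundedness of $\wallt$, and this is where the convolutional structure enters through two ingredients. The first is the reparameterization invariance $\Loss(\dots,\kappa^{-1}w^{(i)},\dots,\kappa w^{(j)},\dots)=\Loss(\wall)$ coming from the multihomogeneity of $\Pi_{\archi}$: differentiating at $\kappa=1$ gives $\langle w^{(i)},\nabla_{w^{(i)}}\Loss\rangle=\langle w^{(j)},\nabla_{w^{(j)}}\Loss\rangle$, and contracting with the flow yields the conservation laws $\frac{d}{dt}(\|w^{(i)}\|^2-\|w^{(j)}\|^2)=0$. Thus all $\|w^{(i)}(t)\|^2$ differ by constants, and the trajectory is bounded if and only if $\|w^{(1)}(t)\|$ is. The second ingredient is that, under the non-degeneracy assumption on the data (full row rank of $X$, the mild condition from the abstract), the quantity $\|\Pi_{\archi}(\wall)X-Y\|_F$ controls $\|\Pi_{\archi}(\wall)\|$, so the decrease of the loss keeps the product matrix $W(t)=\Pi_{\archi}(\wallt)$ bounded on the whole interval of existence; together with the bound below this will also give global existence of the flow.

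It remains to convert boundedness of the product into boundedness of the factors, and here I would use the crucial fact that a composition of \emph{nonzero} convolutions is again nonzero. Via the polynomial picture of \cite{Geo}, the final filter $\pi_{\archi}(\wall)$ corresponds to the product $P_1(z)\,P_2(z^{s_1})\cdots P_N(z^{s_1\cdots s_{N-1}})$ of the filter polynomials $P_i$, whose total degree is exactly $\k-1$ as in \Cref{th_CompConv}; this product is nonzero whenever every $w^{(i)}\neq 0$. Hence $\Pi_{\archi}$ does not vanish on the compact product of unit spheres, so by multihomogeneity $\|\Pi_{\archi}(\wall)\|\ge c\prod_{i}\|w^{(i)}\|$ for some $c>0$. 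Combining this with the conservation laws, if $\|w^{(1)}(t)\|\to\infty$ then every $\|w^{(i)}(t)\|\to\infty$ and therefore $\|W(t)\|\to\infty$, contradicting boundedness of $W(t)$; thus $\wallt$ is bounded.

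Finally, with $\Loss$ real-analytic and $\wallt$ bounded, I would apply the standard Łojasiewicz argument: the $\omega$-limit set of the trajectory is nonempty, compact and connected, $\Loss$ equals $\mathcal{L}_\infty$ on it, and covering it by finitely many neighborhoods on which the Łojasiewicz inequality $|\Loss(w)-\mathcal{L}_\infty|^{1-\theta}\le C\|\nabla\Loss(w)\|$ holds shows that the trajectory has finite length $\int_0^\infty\|\tfrac{d}{dt}\wallt\|\,dt<\infty$. Finite length forces $\wallt$ to be Cauchy, so it converges to a single limit $\wall_\infty$, which is a critical point of $\Loss$ because $\nabla\Loss$ vanishes along a subsequence and is continuous. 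I expect the boundedness step to be the main obstacle, and within it the non-vanishing of products of nonzero convolutions together with its quantitative consequence, since this is precisely where the fully connected reasoning of \cite{BRTW} must be replaced by genuinely convolutional arguments.
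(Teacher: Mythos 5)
Your proposal is correct, and at the step where the paper invests most of its technical effort --- converting boundedness of the product matrix $W(t)=\Pi_{\archi}(\wallt)$ into boundedness of the individual filters --- it takes a genuinely different route. The paper's \cref{lem:bound-individual} is quantitative: it factors the final-filter polynomial into linear factors with controlled $1$-norms (\cref{lem_PolyZerLinFak}), distributes these factors among the layer polynomials by unique factorization in $\C[x]$, and realizes $\|w^{(1)}\|_2^2$ as a root of a monic polynomial whose coefficients are controlled by $T$ and the balancedness constants $\delta_i$, so that the root bound of \cref{lem_rootsBound} yields an explicit bound $\tau(T,\k,\delta_1,\dots,\delta_{N-1})$. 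You instead use multihomogeneity plus compactness: since $\Pi_{\archi}$ is linear in each filter and a product of nonzero convolutions is nonzero (the layer polynomials of \cref{th_Productps} multiply in the integral domain $\R[x]$), the minimum $c$ of $\|\Pi_{\archi}\|$ over the compact product of unit spheres is positive, whence $\|\Pi_{\archi}(\wall)\|\ge c\prod_i\|w^{(i)}\|_2$ for every $\wall$; combined with balancedness (\cref{lem:Normdiffconst}), an unbounded trajectory would force all layer norms, and hence $\|W(t)\|$, to blow up simultaneously, contradicting the bound on $W(t)$ that the full-rank condition and the monotone decrease of the loss provide (the paper's \cref{Cor_EasyCond2}). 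Both arguments rest on the same structural facts --- balancedness and the polynomial picture --- but yours uses the polynomials only qualitatively and is considerably shorter, at the price of a non-constructive constant $c$, whereas the paper's route gives explicit bounds on the layer norms in terms of the data (the purpose of its Section~2), which could matter for quantitative refinements such as discrete-time analyses. Your remaining ingredients (analyticity of the polynomial loss, hence the \LGU{}; global existence from boundedness; the finite-length Łojasiewicz argument of \cref{LGIbeschrKonvKritPunkt}) coincide with the paper's proof via \cref{th_Main} and \cref{th_Main2}. One point to state explicitly: as in \cref{th_Main2}, the full-rank assumption on $XX^\top$ is genuinely used --- you flag it correctly, even though the theorem's statement suppresses it.
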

The general version of this result is stated in 
Theorem~\ref{th_Main} and applies also to other loss functions such as the $\ell_p$-loss for even $p \in 2\N$, the pseudo-Huber loss, the generalized Huber loss and the $\log$-$\cosh$-loss. 

The crucial ingredient in the proof of this theorem is to show boundedness of $\wall(t)$, which is non-trivial and will require the algebraic formalism introduced in \cite{Kohn2}. Once boundedness is established the result follows from the Lojasiewicz theorem.

\subsection{Structure of the paper.}

In Section~\ref{sec:conv-poly} we recall the relation of linear convolutional neural networks and polynomials from \cite{Kohn2} and prove some auxiliary statements.
Section~\ref{sec:grad-flow} presents our main results on the convergence of the gradient flow to critical points of the empirical risk.

\subsection{Acknowledgement}

H.R.\ and U.T.\ acknowledge funding by the Deutsche Forschungsgemeinschaft (DFG, German Research Foundation) -- project number 442047500 --
through the Collaborative Research Center "Sparsity and Singular Structures" (SFB 1481).

\section{Convolutions and Polynomials}
\label{sec:conv-poly}
  
For the proof of our main result we will consider the filters of an LCN as coefficient vectors of certain polynomials, as promoted in \cite{Kohn2}. These polynomials are chosen so that the composition of the convolutions is compatible with polynomial multiplication. This allows us to transfer statements about coefficients of polynomials to the filter vector of a convolution.

\subsection{Construction of polynomials using filter vectors}

Given a vector $w\in\R^{n+1}$, we can associate  a polynomial with $w$ via the mapping 
\begin{align*}
    p_{n}:\R^{n+1}\to\R[x]_{n},\; w\mapsto \sum_{i=0}^{n}w_{i+1}x^{i}.
\end{align*}

In the next theorem, we show that we can factorize the polynomial $p_{\k-1}(v)$ of the final filter $v$ into $N$ polynomials, where the coefficients of these polynomials either correspond to the entries of \wall or are zero. The following proposition is based on \cite[Proposition 2.6 and Remark 2.8]{Geo}. For convenience we give a detailed proof.
\medskip

\begin{proposition}[{{\cite[Proposition 2.6 and Remark 2.8]{Geo}}}]
\label{th_Productps}
Let $(\textbf{d},\textbf{k},\textbf{s})$ be an architecture of depth $N$. Let \wall be the vector of all filter vectors of an LCN and define $v:=\pi_{\archi}(\wall)$ with associated filter width $\k$. Then the polynomial
$p_{\k-1}(v)$ can be decomposed into $N$ polynomials $\tilde{p}_{k_i,\textbf{s}}(\wi)$ with respective degree $t_i=(k_i-1)\prod_{n=1}^{i-1}s_n$. These polynomials are given by 
\begin{align}
\label{eq_SpecialPolynomConstruction}
    \Tilde{p}_{k_i,\textbf{s}}(\wi)(x)=\sum_{j=0}^{k_i-1}\wi_{j+1}\cdot x^{j\prod_{n=1}^{i-1}s_n} \qquad\qquad\text{for }i\in\ul{N}.
\end{align}
\end{proposition}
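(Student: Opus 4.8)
The plan is to prove the factorization $p_{\k-1}(v)=\prod_{i=1}^{N}\tilde p_{k_i,\textbf{s}}(w^{(i)})$ by induction on the depth $N$; once this identity is established, the degree assertion is immediate, since the highest exponent occurring in $\tilde p_{k_i,\textbf{s}}(w^{(i)})$ is $(k_i-1)\prod_{n=1}^{i-1}s_n=t_i$, read directly off \eqref{eq_SpecialPolynomConstruction}. For $N=1$ there is nothing to do: here $v=w^{(1)}$ and $\k=k_1$, and $p_{k_1-1}(w^{(1)})=\tilde p_{k_1,\textbf{s}}(w^{(1)})$ because the empty product $\prod_{n=1}^{0}s_n$ equals $1$.

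The heart of the argument is a single two-convolution computation, which I would isolate first. Let $\beta$ be a convolution with filter $u=(u_1,\dots,u_{\kappa'})$ and stride $\sigma$ (the inner map) and let $\gamma$ be a convolution with filter $c=(c_1,\dots,c_k)$ and stride $\tau$ (the outer map). Writing out $(\gamma\circ\beta)(x)_\ell$ and substituting the defining formula of $\beta$ into that of $\gamma$, the index into $x$ attached to the pair $(m,n)$ (with coefficients $c_m$ and $u_n$) is $((\ell-1)\tau+m-1)\sigma+n=(\ell-1)\sigma\tau+(m-1)\sigma+n$. Comparing with the defining formula of the composite convolution, which has stride $\sigma\tau$ by \Cref{th_CompConv}, identifies its filter $v$ through $v_p=\sum_{(m,n):(m-1)\sigma+n=p}c_m u_n$. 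The crucial step is then purely algebraic: substituting this into $p(v)(x)=\sum_p v_p\,x^{p-1}$ and using $x^{(m-1)\sigma+n-1}=x^{n-1}\cdot x^{(m-1)\sigma}$ lets the double sum factor, giving
\begin{align*}
p_{\kappa'+(k-1)\sigma-1}(v)(x)=\Bigl(\sum_{n=1}^{\kappa'}u_n x^{n-1}\Bigr)\cdot\Bigl(\sum_{m=1}^{k}c_m x^{(m-1)\sigma}\Bigr).
\end{align*}
In words, the composite polynomial is the plain polynomial of the inner filter times the polynomial of the outer filter whose exponents are dilated by the inner stride $\sigma$.

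For the inductive step I would fold the first $N-1$ layers together. By \Cref{th_CompConv} the composition $\alpha_{N-1}\circ\cdots\circ\alpha_1$ is a single convolution $\beta$ with stride $\sigma=\prod_{i=1}^{N-1}s_i$ and some filter $u$ of width $\kappa'$, and by the induction hypothesis $p_{\kappa'-1}(u)=\prod_{i=1}^{N-1}\tilde p_{k_i,\textbf{s}}(w^{(i)})$, using that the formula \eqref{eq_SpecialPolynomConstruction} for $i\le N-1$ refers only to the strides $s_1,\dots,s_{N-1}$ and is therefore unchanged when passing to the subnetwork. Applying the two-convolution identity with inner map $\beta$ (stride $\sigma$) and outer map $\gamma=\alpha_N$ (filter $w^{(N)}$), the second factor becomes $\sum_{m=1}^{k_N}w^{(N)}_m x^{(m-1)\sigma}$, which is exactly $\tilde p_{k_N,\textbf{s}}(w^{(N)})$ since $\sigma=\prod_{n=1}^{N-1}s_n$. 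Multiplying the two factors closes the induction.

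The only genuine obstacle is bookkeeping the index arithmetic in the two-convolution step: verifying that the nested shift $((\ell-1)\tau+m-1)\sigma+n$ collapses to $(\ell-1)\sigma\tau+(m-1)\sigma+n$, and that the resulting exponent splits multiplicatively so that the distributive law produces a product of sums. Everything else is routine, together with the empty-product convention. Alternatively one can bypass the induction by unrolling all $N$ layers at once: the index into $x$ for output coordinate $\ell$ is $(\ell-1)\prod_{i}s_i+1+\sum_{i=1}^{N}(n_i-1)\prod_{m=1}^{i-1}s_m$, whence $p_{\k-1}(v)(x)=\sum_{n_1,\dots,n_N}\bigl(\prod_i w^{(i)}_{n_i}\bigr)\,x^{\sum_i (n_i-1)\prod_{m=1}^{i-1}s_m}$ factors directly as $\prod_{i=1}^{N}\tilde p_{k_i,\textbf{s}}(w^{(i)})$, with no injectivity of the exponent map required since the distributive law correctly accumulates coinciding exponents.
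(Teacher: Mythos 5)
Your proof is correct and follows essentially the same route as the paper: induction on $N$, folding the first $N-1$ layers into a single convolution via \Cref{th_CompConv}, and then using the two-convolution filter-composition formula to factor the polynomial. Your isolated ``two-convolution computation'' is precisely the identity \eqref{eq_endfilter} that the paper derives in the appendix proof of \Cref{th_CompConv} and cites in its proof of \Cref{th_Productps}; you simply re-derive it inline instead of referencing it.
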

\begin{proof}
We prove the statement by induction on $N$. For $N=1$, the first filter \wI{1} corresponds to the final filter, thus
\begin{align*}
\Tilde{p}_{k_1,\textbf{s}}(\wI{1})(x)=\sum_{j=0}^{k_1-1}\wI{1}_{j+1}\cdot x^{j}=p_{k_1-1}(\wI{1})(x).
\end{align*}

We assume now the claim holds for $N-1$. In the setup of the proposition for $N-1$, according to  \Cref{th_CompConv}, $\widetilde{\w}=\pi_{\archi}\l(\wI{1},\dots, \wI{N-1})\r$ is a convolution with filter width $k_{ \widetilde{v}}=k_1+\sum_{i=2}^{N-1}(k_i-1)\prod_{m=1}^{i-1}s_m$ and stride $s_{ \widetilde{v}}=\prod_{i=1}^{N-1} s_i$. Let again $\w=\pi_{\archi}(\wI{1},\dots,\wI{N})$ with filter width $\k$ and stride $\s$. Then, according to the induction hypothesis
\begin{align*}
    \prod_{j=1}^{N} \Tilde{p}_{k_j,\textbf{s}}(w^{(j)})(x)& =  \Tilde{p}_{k_N,\textbf{s}}(w^{(N)})(x) \cdot p_{k_{ \widetilde{v}}-1}(\widetilde{v})(x)\\ 
    & = \left(\sum_{j=0}^{k_{N}-1}\wI{N}_{j+1}\cdot x^{j\prod_{n=1}^{N-1}s_n}\right)\cdot \left(\sum_{i=0}^{k_{ \widetilde{v}}-1}\widetilde{v}_{i+1}\cdot x^{i}\right) \\
    & = \sum_{j=0}^{k_{N}-1}\sum_{i=0}^{k_{ \widetilde{v}}-1}\wI{N}_{j+1}\cdot\widetilde{v}_{i+1}\cdot x^{j\cdot s_{ \widetilde{v}}+i} 
\end{align*}
is a polynomial of degree $(k_{N}-1)s_{ \widetilde{v}}+k_{ \widetilde{v}}-1=(k_{N}- 1)s_{ \widetilde{v}}+k_1+\sum_{i=2}^{N-1}(k_i-1)\prod_{m=1}^{i-1}s_m-1
=\k-1$ by \Cref{th_CompConv}. This allows us to further rewrite the product as 
\begin{align*}
    \prod_{j=1}^{N} \Tilde{p}_{k_j,\textbf{s}}(w^{(j)})(x)=& \sum_{m=0}^{\k-1}x^m\sum_{\substack{j\in \ul{k_N} ,\, i\in\ul{k_{ \widetilde{v}}}\\ m=(j-1)\cdot s_{ \widetilde{v}}+i-1}}\wI{N}_{j}\cdot\widetilde{v}_{i}\\
    & = 
    \sum_{m=0}^{\k-1}x^m\cdot \v_{m+1}
    = p_{\k-1}(\w)(x),
\end{align*}
where we used \eqref{eq_endfilter} from the proof of Proposition~\ref{th_CompConv} in the last line.
\end{proof}
\medskip

Let $\wI{1}\in \R^{k_1}$ and $\wI{2}\in \R^{k_2}$ be two filter vectors for the convolutions $\alpha_1$ and $\alpha_2$ with strides $s_1$ and $s_2$. Then, according to Proposition~\ref{th_Productps}, the filter vector of $\alpha_2\circ\alpha_1$ corresponds to the coefficients of the polynomial $\Tilde{p}_{k_1,(s_1,s_2)}(\wI{1})(x)\cdot\Tilde{p}_{k_2,(s_1,s_2)}(\wI{2})(x)$. In this way, the convolution is compatible with the multiplication of polynomials constructed according to \eqref{eq_SpecialPolynomConstruction}. 
\medskip

In particular, for an architecture with $s_i=1$ for all $i\in\ul{N}$, according to \Cref{th_Productps}, we have
\begin{align*}
    p_{\k-1}(\w)(x)= p_{k_{1}-1}(\wI{1})(x)\cdot\ldots\cdot p_{k_{N}-1}(\wI{N})(x).
\end{align*}
Let $\C[x]_n$ be the set of polynomials in $x$ with complex coefficients whose degree is at most $n$. For a polynomial $q(x)=a_nx^n+a_{n-1}x^{n-1}+\cdots + a_0\in \C[x]_n$, let us define the $p$-norm for $p\geq 1$ by 
\[
\triple{q}_p
:= \|a\|_p =
\left(\sum_{i=0}^n\vert a_i\vert^p\right)^{\frac{1}{p}}.
\]
Then the $p$-norm of $p_{\k-1}(\w)$ equals the $p$-norm of $\w$, i.e., 
$\triple{p_{\k-1}(\w)}_p = \|\w\|_p$
and $
\triple{\Tilde{p}_{k_i,\textbf{s}}(\wi)}_p =\Vert \wi \Vert_p$ for $i\in\ul{N}$.

\subsection{Auxiliary statements about polynomials}

This subsection contains general statements about polynomials that are needed for the proofs in the next chapter. The focus is on the relationship between the boundedness of the roots of a polynomial and the boundedness of the coefficients of the polynomial. This is helpful because, according to the polynomial construction in the previous chapter, the polynomial coefficients correspond to the components of a filter vector.

We introduce $\ul{n}_0 := \{0\} \cup \ul{n} 
= \{0,1,\hdots,n\}$.

\begin{lemma}
\label{lem_NormSubmulti}
    The 1-norm on the set of complex polynomials 
    is submultiplicative, i.e., for two complex polynomials $p,q$ 
    we have  
    \begin{align*}
        \triple{pq}_1 \leq
        \triple{p}_1  \triple{q}_1.
    \end{align*}
\end{lemma}
\begin{proof} Let $n\in \N_0$ be the degree of $pq$. Then $p$ and $q$ have degree at most $n$ and we can write 
     $p(x)=a_nx^n+a_{n-1}x^{n-1}+\cdots + a_0$ and $q(x)=b_nx^n+b_{n-1}x^{n-1}+\cdots + b_0$. 
    Then 
    \begin{align*}
        & p(x)\cdot q(x)=\sum_{k=0}^nx^k\sum_{\substack{i,j\in\ul{n}_0 \\ i+j=k}}a_ib_j=\sum_{k=0}^nx^k\sum_{i=0}^ka_ib_{k-i}
    \end{align*}
    and we obtain
    \begin{align*}
    \triple{pq}_1  
        =\sum_{k=0}^n\left|\sum_{i=0}^ka_ib_{k-i}\right|\leq 
\sum_{k=0}^n\sum_{i=0}^k\left|a_i\right|\left|b_{k-i}\right| \leq \sum_{i=0}^n |a_i| \sum_{j=0}^n |b_{j}| =
\triple{p}_1   \triple{q}_1. 
\end{align*}
This completes the proof.
\end{proof}
\medskip

For $p>1$ the $p$-norm is not submultiplicative. However, due to Lemma \Cref{lem_NormSubmulti} and since any two norms on the  finite dimensional vector space $\C[x]_n$ are equivalent, there is a constant $c(p,n)>0$  such that for $q_1 q_2 \in \C[x]_n$ we have
$$\triple{q_1 q_2}_p \leq c(p,n) \triple{q_1}_p \triple{q_2}_p.
$$

The next lemma shows that the roots of a polynomial $p(x)$ can be bounded in terms of the coefficients of the polynomial.\medskip

\begin{lemma}
\label{lem_rootsBound}
Let $p(x)=a_nx^n+a_{n-1}x^{n-1}+\cdots + a_0 \in \C[x]_n$ be a polynomial of degree $n\in\N$. 
Set $A:=\frac{n}{\vert a_n\vert}\max\limits_{i\in\ul{n}_0}\vert a_i\vert$. If $x \in \C$ is an arbitrary zero of $p$, i.e., $p(x) = 0$, then $|x| \leq A$.
\end{lemma}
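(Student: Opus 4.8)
The plan is to prove the bound directly from the defining equation of a root, using the triangle inequality to trade the polynomial identity for a scalar inequality in $|x|$. Since $x$ is a zero of $p$, we have $a_n x^n = -\sum_{i=0}^{n-1} a_i x^i$, and taking moduli gives
\[
|a_n|\,|x|^n \le \sum_{i=0}^{n-1} |a_i|\,|x|^i \le M \sum_{i=0}^{n-1} |x|^i,
\]
where $M := \max_{i\in\ul{n}_0} |a_i|$. The goal is then to extract $|x| \le nM/|a_n| = A$ from this inequality.

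First I would record the elementary but crucial observation that $A \ge n \ge 1$, which holds because $M \ge |a_n|$ and hence $A = nM/|a_n| \ge n$. This lets me dispose of the case $|x| \le 1$ at once: then $|x| \le 1 \le A$ and there is nothing to prove (this also covers the root $x=0$). In the remaining case $|x| > 1$, each term satisfies $|x|^i \le |x|^{n-1}$ for $0 \le i \le n-1$, so the sum is crudely bounded by $\sum_{i=0}^{n-1}|x|^i \le n\,|x|^{n-1}$. Substituting this into the displayed estimate yields $|a_n|\,|x|^n \le M n\,|x|^{n-1}$; dividing through by the nonzero quantity $|x|^{n-1}$ gives $|a_n|\,|x| \le nM$, i.e.\ $|x| \le nM/|a_n| = A$, as claimed.

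I do not expect any genuine obstacle here: this is a Cauchy-type root bound and the argument is routine. The one point that requires a little care is the case split on whether $|x| \le 1$, which is needed precisely because the crude estimate $\sum_{i=0}^{n-1}|x|^i \le n\,|x|^{n-1}$ is only valid for $|x| \ge 1$; the inequality $A \ge 1$ established at the outset is exactly what guarantees that the complementary regime $|x| \le 1$ is already consistent with the desired conclusion.
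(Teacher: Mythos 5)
Your proof is correct and is essentially the same argument as the paper's: both isolate the leading term $a_nx^n$ via the root equation, apply the triangle inequality, and bound the lower-order terms by $M=\max_{i}\vert a_i\vert$ using the crude estimate that is only valid for $\vert x\vert\geq 1$. The only difference is organizational --- the paper argues by contradiction (assuming $\vert x\vert > A\geq n\geq 1$, which makes the regime $\vert x\vert\leq 1$ vacuous), whereas you argue directly with an explicit case split on $\vert x\vert\leq 1$ --- and this does not change the mathematical substance.
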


\begin{proof}
We proceed by contradiction and assume that $x \in \C$ is a zero of $p$ with $\vert x\vert > A\geq n\geq 1$. For $k\in\ul{n}$ and $a_{n-k}\not=0 $ we have  
\begin{align}
\label{Ab1}
\frac{\vert a_{n-k}\vert}{\vert a_n\vert \vert x^k\vert}\stackrel{\vert x \vert\geq 1}{\leq} \frac{\vert a_{n-k}\vert}{\vert a_n\vert \vert x\vert}\stackrel{\vert x\vert> \frac{n\vert a_{n-k}\vert }{\vert a_n \vert} }{<} \frac{\vert a_{n-k}\vert}{\vert a_n\vert }\frac{\vert a_n\vert}{n\vert a_{n-k}\vert} = \frac{1}{n}.
\end{align}
For $a_{n-k}=0$, we also have $\frac{\vert a_{n-k}\vert}{\vert a_n\vert \vert x^k\vert}<\frac{1}{n}$.

We obtain
\begin{alignat*}{2}
\vert p(x) \vert &= \vert a_nx^n \vert\cdot \vert 1+\frac{a_{n-1}}{a_n x}+\dots +\frac{a_{0}}{a_n x^n}\vert\\ &\geq \vert a_nx^n \vert\cdot \l 1- \vert\frac{a_{n-1}}{a_n x}+\dots +\frac{a_{0}}{a_n x^n}\vert \r\\ &\stackrel{\eqref{Ab1}}{>}\vert a_nx^n \vert\cdot \l 1- n\cdot \frac{1}{n} \r=0, 
\end{alignat*}
where the triangle inequality was used in the penultimate inequality.
\end{proof}

\medskip

According to this lemma, the roots of a polynomial are bounded in terms of the coefficients of the polynomial. However, if the leading coefficient is arbitrarily small, the bound of the roots is arbitrarily large. However, the next lemma states that we can  write any polynomial with complex coefficients as a product of linear factors such that the $1$-norms of any of these linear factors is controlled in terms of the $1$-norm of the polynomial. 
\medskip

\begin{lemma}
\label{lem_PolyZerLinFak}
Let $p(x)=a_nx^n+a_{n-1}x^{n-1}+\cdots + a_0$ be a polynomial of degree $n\in\N$ with $a_i\in\C$ and  let $T:=\max\{\triple{p}_1,1\}$. Then there exists a factorization of $p$ into linear factors $u_1,\dots,u_n$ in $\C[x]_1$ such that 
\begin{align*}
p(x)=\sgn(a_n)\cdot\displaystyle\prod_{i=1}^{n}u_i(x) \quad\text{and}\quad \triple{u_i}_1\leq 6nT\quad \text{for } i\in\ul{n}. 
\end{align*}
\end{lemma}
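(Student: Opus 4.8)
The plan is to start from the factorization into linear factors provided by the fundamental theorem of algebra, $p(x) = a_n\prod_{i=1}^n (x-\lambda_i)$ with $\lambda_1,\dots,\lambda_n\in\C$, and then redistribute the scalar $a_n$ among the factors in a balanced way. Since $a_n = \sgn(a_n)\,|a_n|$, it suffices to produce a factorization $|a_n|\prod_i(x-\lambda_i) = \prod_{j=1}^n u_j(x)$ with each $\triple{u_j}_1 \le 6nT$.

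The key idea is to treat small and large roots differently so that the linear factors become \emph{normalized}. For a root with $|\lambda_i|\le 1$ I keep the factor $x-\lambda_i$, whose $1$-norm is $1+|\lambda_i|\le 2$. For a root with $|\lambda_i|>1$ I rewrite $x-\lambda_i = (-\lambda_i)\bigl(1 - \lambda_i^{-1}x\bigr)$; the reciprocal factor $1-\lambda_i^{-1}x$ has $1$-norm $1+|\lambda_i|^{-1}\le 2$. Collecting the scalars produced by the large roots, I obtain $|a_n|\prod_i(x-\lambda_i) = C\cdot\prod_{j=1}^n g_j(x)$, where $g_1,\dots,g_n$ are the $n$ normalized linear factors, each with $\triple{g_j}_1\le 2$, and $C = |a_n|\prod_{|\lambda_i|>1}(-\lambda_i)$ is a nonzero complex scalar (nonzero since $a_n\neq 0$ and large roots are nonzero) with $|C| = |a_n|\prod_{|\lambda_i|>1}|\lambda_i|$. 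This modulus is exactly the Mahler measure $M(p) = |a_n|\prod_i \max(1,|\lambda_i|)$. To balance, I fix one $n$-th root $C^{1/n}$ of $C$ and set $u_j := C^{1/n} g_j$ for $j\in\ul{n}$; then $\prod_{j=1}^n u_j = C\prod_j g_j = |a_n|\prod_i(x-\lambda_i)$, so that $p = \sgn(a_n)\prod_j u_j$ as required, and each factor obeys $\triple{u_j}_1 = |C|^{1/n}\,\triple{g_j}_1 \le 2\,M(p)^{1/n}$.

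The main obstacle is controlling $M(p)^{1/n}$ in terms of $T$, which reduces to the bound $M(p)\le \triple{p}_1$. This is Landau's inequality $M(p)\le \triple{p}_2$ combined with $\triple{p}_2\le \triple{p}_1$; if a self-contained argument is preferred, one proves $M(p)\le\triple{p}_2$ via the Blaschke-type polynomial $\hat p(x)=a_n\prod_{|\lambda_i|\le 1}(x-\lambda_i)\prod_{|\lambda_i|>1}(\bar\lambda_i x-1)$, whose leading coefficient has modulus $M(p)$ and which satisfies $|\hat p(x)|=|p(x)|$ for $|x|=1$, so that $M(p)\le\triple{\hat p}_2=\triple{p}_2$ by Parseval. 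Granting this, $M(p)^{1/n}\le T^{1/n}\le T$ since $T\ge 1$, whence $\triple{u_j}_1\le 2T\le 6nT$, comfortably within the claimed constant. The degenerate case of no large roots gives $C=|a_n|>0$ and is covered by the same formula, which completes the argument.
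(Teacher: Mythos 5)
Your proof is correct, but it takes a genuinely different route from the paper's. Both arguments start from the fundamental theorem of algebra and normalize the linear factors so that each has $1$-norm at most $2$ (you invert the large roots, writing $x-\lambda_i=(-\lambda_i)(1-\lambda_i^{-1}x)$, while the paper keeps $x-z_i$ and divides by $|z_i|$), and both must then control the collected scalar, whose modulus is exactly the Mahler measure $M(p)=|a_n|\prod_i\max(1,|\lambda_i|)$. This is where the approaches diverge: the paper bounds the product of the large roots by $3Bn^r$ (with $B=\max_i|a_i|/|a_n|$) through a lengthy, purely algebraic computation with elementary symmetric functions (Vieta's formulas plus the triangle inequality), then spreads this scalar over only the $r$ large-root factors, which after taking $r$-th roots yields the constant $6nT$. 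You instead recognize the scalar as $M(p)$ and invoke Landau's inequality $M(p)\le\triple{p}_2\le\triple{p}_1\le T$ --- with a correct self-contained justification via the Blaschke-type polynomial $\hat p$ and Parseval --- and spread a fixed $n$-th root of the scalar over all $n$ factors, obtaining $\triple{u_j}_1\le 2M(p)^{1/n}\le 2T$, which is sharper than $6nT$ and dimension-free. What each buys: the paper's argument is entirely elementary-algebraic, avoiding any appeal to integration over the circle, at the cost of length and of the factor $n^r$ that degrades the constant; yours is much shorter, gives the better constant, and would even propagate downstream, improving the bound $(6Tk_v)^{2k_v}$ in the proof of \Cref{lem:bound-individual} to $(2T)^{2k_v}$. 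Since your factors $u_j$ are still degree-one polynomials with the same roots $\lambda_j$, the unique-factorization argument used there goes through unchanged.
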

\begin{proof} The statement is clear for $n=1$, so let us assume that $n\geq 2$.
Since $a_n\neq 0$ the fundamental theorem of algebra implies that there exist $n$ complex roots $z_1,\dots,z_n$ with $z_i\in\C$ and
\begin{align*}
    p(x)=a_n\prod_{i=1}^n (x-z_i)=a_n\l x^n+\frac{a_{n-1}}{a_n}\cdot x^{n-1}+\dots +\frac{a_1}{a_n}\cdot x+\frac{a_0}{a_n}\r.
\end{align*}

\textbf{Case 1:} If $\vert z_i \vert\leq 1$ for all $i\in\ul{n}$ then we have $\vert a_n\cdot z_i \vert\leq\vert a_n \vert\leq T $ for $i\in\ul{n}$, hence $\triple{u_i}_1 \leq 2 T$ and the assertion follows. 

\textbf{Case 2:} Suppose that there exists an $i\in\ul{n}$ with $\vert z_i \vert> 1$ and without loss of generality
assume that $\vert z_1 \vert\geq\vert z_2 \vert\geq \vert z_3 \vert\geq \dots \geq \vert z_n \vert$. Then there exists an $r\in\N$ such that $\vert z_i\vert\geq 1$ for $i\leq r$ and $\vert z_i\vert< 1$ for $i> r$. Let $B:=\frac{1}{\vert a_n\vert}\displaystyle\max_{i\in\ul{n}_{0}}\vert a_i\vert \geq 1$. We will show that the product of the $r$ largest roots $\vert z_1\cdot\dots\cdot z_r\vert$ is upper bounded by $3Bn^r$. The $(n-r)$th coefficient of the polynomial $x^n+\frac{a_{n-1}}{a_n}\cdot x^{n-1}+\dots +\frac{a_1}{a_n}\cdot x+\frac{a_0}{a_n}$ satisfies
\begin{align}
\label{eq_rootsInequalityProof}
    \left|\frac{a_{n-r}}{a_{n}}\right|&= \left|\l-1\r^{r}\cdot\sum_{j_1<\ldots <j_r} z_{j_1}\cdot\ldots\cdot z_{j_r}\right|\notag\\
    &=\left|z_{1}\cdot\ldots\cdot z_{r}+\sum_{i=1}^{n-r}z_{r+i}\sum_{\substack{j_1<\ldots <j_{r-1}\\j_k\not\in\{r+1,\ldots,r+i\} \forall k}} z_{j_1}\cdot\ldots\cdot z_{j_{r-1}}\right|.
\end{align}
We decompose the right-hand summand further by applying the same argument repeatly,
{\small
\begingroup
\allowdisplaybreaks
\begin{align*}
  & \sum_{i=1}^{n-r}z_{r+i}\sum_{\substack{j_1<\ldots <j_{r-1}\\ j_k\not\in\{r+1,\ldots,r+i\}}}z_{j_1}\cdot\ldots\cdot z_{j_{r-1}} \\
    = & \sum_{i=1}^{n-r}z_{r+i}\left(\sum_{j_1<\ldots <j_{r-1} }z_{j_1}\cdot\ldots\cdot z_{j_{r-1}}-\sum_{\substack{j_1<\ldots <j_{r-1}\\\ \exists j_k : j_k\in\{r+1 ,\ldots,r+i\}}}z_{j_1}\cdot\ldots\cdot z_{j_{r-1}}\right) \\
    =\, & \sum_{i=1}^{n-r}z_{r+i}\left(\l -1\r^{r-1}\cdot\frac{a_{n-(r-1)}}{a_n}-\sum_{i_2=1}^{i}z_{r+i_2}\sum_{\substack{j_1<\ldots <j_{r-2}\\ j_k\not\in\{r+1, \ldots,r+i_2\}\forall k}}z_{j_1}\cdot\ldots\cdot z_{j_{r-2}}\right)      \\
  & \, \vdots \\
  =  & \sum_{i=1}^{n-r}z_{r+i}\Bigg( (-1)^{r-1}\cdot\frac{a_{n-(r-1)}}{a_n}-\sum_{i_2=1}^{i}z_{r+i_2} \Bigg( ( - 1)^{r-2}\cdot \frac{a_{n-r+2}}{a_n}\\
    & \quad -\sum_{i_3=1}^{i_2}z_{r+i_3}\Bigg( ( -1)^{r-3}\cdot\frac{a_{n-r+3}}{a_n} \ldots\Bigg(\ldots-\sum_{i_{r-1}=1}^{i_{r-2}}z_{r+i_{r-1}}\Bigg(-\frac{a_{n-1}}{a_n} \\
    & \quad - \sum_{i_r=r+1}^{r+i_{r-1}}z_{i_r} \Bigg)\Bigg)\ldots\Bigg).
\end{align*}
\endgroup
}Using that $\sum_{i_{r-(k-1)}=1}^{i_{r-k}}1\leq n$ for $1\leq k\leq r-1$ this implies that
{\footnotesize
\begin{align*}
      & \Biggl|\sum_{i_{1}=1}^{n-r}z_{r+i_{1}}\sum_{\substack{j_1<\ldots <j_{r-1}\\ j_k\not\in\{r+1,\ldots,r+i_{1}\}}}z_{j_1}\cdot\ldots\cdot z_{j_{r-1}}\Biggr| \\
\begin{split}
  \leq \, & \sum_{i_{1}=1}^{n-r}\underbrace{\vert z_{r+i_{1}}\vert}_{\leq 1}\Bigg(\underbrace{\Big\vert\frac{a_{n-(r- 1)}}{a_n}\Big\vert}_{\leq B}+\sum_{i_2=1}^{i_{1}}\underbrace{\vert z_{r+i_2}\vert}_{\leq 1}\Bigg( \underbrace{\Big\vert\frac{a_{n- r+2}}{a_n}\Big\vert}_{\leq B}+\sum_{i_3=1}^{i_2}\underbrace{\vert z_{r+i_3}\vert}_{\leq 1}\Bigg(\underbrace{\Big\vert\frac{a_{n-r+3}}{a_n}\Big\vert}_{\leq B}\\\
    & \quad +\ldots\Bigg(\ldots+\sum_{i_{r-1}=1}^{i_{r-2}}\underbrace{\vert z_{r+i_{r-1}}\vert}_{\leq 1}\Bigg(\underbrace{\Big\vert\frac{a_{n- 1}}{a_n}\Big\vert}_{\leq B}+\underbrace{\sum_{i_{r}=r+1}^{r+i_{r-1}}\underbrace{\vert z_{i_r}\vert}_{\leq 1}}_{\leq n}\Bigg)\Bigg)\Bigg)\ldots\Bigg)
\end{split}\\\
\leq \, & \sum_{i_{1}=1}^{n-r} \Bigg(B+\sum_{i_2=1}^{i_{1}} \Bigg( B+\sum_{i_3=1}^{i_2} \Bigg(B
     +\ldots\Bigg(\ldots+\sum_{i_{r-2}=1}^{i_{r-3}} \Bigg(B+\underbrace{\sum_{i_{r-1}=1}^{i_{r-2}}\Bigg(B+n\Bigg)}_{\leq Bn + n^2}\Bigg)\Bigg)\ldots\Bigg)\\
\leq \, & \sum_{i_{1}=1}^{n-r}\Bigg(B+\sum_{i_2=1}^{i_{1}}\Bigg( B+\sum_{i_3=1}^{i_2}\Bigg(B +\ldots\Bigg(\ldots +\sum_{i_{r- 3}=1}^{i_{r-4}}\Bigg(B+
\sum_{i_{r-2}=1}^{i_{r-3}}\Bigg(B(1+n)+n^2)\Bigg)
\Bigg)\ldots\Bigg)\\  
\leq &\sum_{i_1=1}^{n-r}\left( B(1+n+\cdots + n^{r-2}) + n^{r-1}\right)\\
\leq &B(n+n^2+\cdots + n^{r-1}) + n^{r} \leq 
(B+1)n^r.
\end{align*}}

This inequality implies an upper bound of the product of the $r$ largest roots in the following way,
\begin{align}
\label{eq_ProdziKleinerOA}
 & \Bigg| z_{1}\cdot\ldots\cdot z_{r}\Bigg|- \Bigg|\sum_{i=1}^{n-r}z_{r+i}\sum_{\substack{j_1<\ldots <j_r\\j_k\not\in\{r+1,\ldots,r+i\}}} z_{j_1}\cdot\ldots\cdot z_{j_{r-1}}\Bigg|\notag\\
 \leq \;& \Bigg| z_{1}\cdot\ldots\cdot z_{r}+\sum_{i=1}^{n-r}z_{r+i}\sum_{\substack{j_1<\ldots <j_r\\j_k\not\in\{r+1,\ldots,r+i\}}} z_{j_1}\cdot\ldots\cdot z_{j_{r-1}}\Bigg| \; \stackrel{\eqref{eq_rootsInequalityProof}}{=} \; \left| \frac{a_{n-r}}{a_{n}}\right| \leq B.
 \end{align}
Therefore
\begin{align}
 \left| z_{1}\cdot\ldots\cdot z_{r}\right| \leq & B + \Bigg|\sum_{i=1}^{n-r}z_{r+i}\sum_{\substack{j_1<\ldots <j_r\\j_k\not\in\{r+1, \ldots,r+i\}}} z_{j_1}\cdot\ldots\cdot z_{j_{r-1}}\Bigg|
 \leq B+(B+1)n^{r}\\
 \leq & 3Bn^r \notag,
\end{align}
where we used that $B \geq 1$.
We construct the linear factors $u_i$ as
\begin{align*}
   & u_i(x)=\frac{\vert a_n z_{1}\cdot\ldots\cdot z_{r}\vert^{1/r}}{\vert z_i\vert}(x-z_i) &&\text{for } 1\leq i\leq r \\
    \intertext{and} 
    & u_i(x)= (x-z_i) && \text{for } r< i\leq n.
\end{align*}
For $i>r$ we have $|z_i| \leq 1$ so that $\triple{u_i}_1 = 1$. 
For $1\leq i\leq r$, using that $|a_n| B \leq T$, $|z_i| \geq 1$ and $T\geq 1$, the coefficient of the linear term of $u_i$ satisfies 
\begin{align*}    &\big\vert\vert a_n z_{1}\cdot\ldots\cdot z_{r}\vert^{1/r} \frac{1}{\vert z_i\vert}
\leq (\vert a_n \vert 3Bn^r)^{1/r}
\leq 3 n T
\end{align*}
The constant coefficient of $u_i$ can be estimated analogously 
\begin{align*}
   &\big\vert\vert a_n z_{1}\cdot\ldots\cdot z_{r}\vert^{1/r} \frac{z_i}{\vert z_i\vert}\big\vert=\vert a_n z_{1}\cdot\ldots\cdot z_{r}\vert^{1/r} \leq 3nT.
\end{align*}
This means that $\triple{u_i}_1 \leq 6nT$ for $i\in\ul{n}$. 
It is easy to check that indeed $p(x)=\sgn(a_n)\cdot\displaystyle\prod_{i=1}^{n}u_i(x)$. 
and thus the assertion follows.
\end{proof}
\medskip

\section{Gradient flows for training linear convolutional networks}
\label{sec:grad-flow}
In this section, we consider gradient flows of linear convolutional networks and study their convergence to a critical point of \Loss.
Our main technical tool will be
the Łojasiewicz gradient inequality, which reduces the problem to showing boundedness of the flow.

\subsection{ Łojasiewicz'  gradient inequality}

Łojasiewicz' gradient inequality \cite{eins} is a regularity condition on functions.

\begin{definition}[Łojasiewicz gradient inequality]
A differentiable function $\phi:\R^{n}\to \R$ satisfies the Łojasiewicz gradient inequality (LGI) in $x\in\R^n$ if there exists a neighborhood $U$ of $x$, and constants $c>0$ and $\mu\in[0,1)$ such that
\begin{align*}
    \vert \phi(x)-\phi(y) \vert^\mu \leq c \Vert \nabla \phi(y) \Vert \qquad \mbox{ for all }  y\in U,
\end{align*}
where $\Vert\cdot\Vert$ is some norm on $\R^n$.
\end{definition}
 Łojasiewicz' theorem \cite{Loja1} states that analytic functions satisfy the Łojasiewicz gradient inequality at any point.
An important consequence of the \LGU{} is stated in the next theorem, see e.g.\ \cite[Theorem 3.1]{BRTW} resp.\  \cite[Theorem 2.2]{eins}.

\begin{theorem}
\label{LGIbeschrKonvKritPunkt}
Let $f:\R^{n}\to\R$ be a differentiable function which satisfies \LGU{} at every point. If $t\mapsto x(t)\in\R^n, t\in[0,\infty)$,
solves the ODE $\dot{x}(t)=-\nabla f(x(t))$ and is bounded,
then $x(t)$ converges to a critical point of $f$ as 
$t\to\infty$. 
\end{theorem}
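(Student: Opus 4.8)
The plan is to run the classical Łojasiewicz convergence argument, reducing everything to a finite-length estimate for the trajectory. First I would note that the function value is monotone along the flow: since $\frac{d}{dt} f(x(t)) = \langle \nabla f(x(t)), \dot{x}(t)\rangle = -\|\nabla f(x(t))\|^2 \le 0$, the map $t \mapsto f(x(t))$ is non-increasing, and because $x(t)$ is bounded its closure is compact, so $f$ is bounded below on it and $f(x(t))$ decreases to a limit $f_\infty$ as $t \to \infty$. If $f(x(t_0)) = f_\infty$ for some finite $t_0$, monotonicity forces $\nabla f(x(t)) = 0$ and hence $\dot{x}(t) = 0$ for all $t \ge t_0$, so $x(t)$ is eventually constant and the claim is trivial; thus I may assume $f(x(t)) > f_\infty$ for all $t$.

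Next, boundedness makes the $\omega$-limit set nonempty, so I would fix a limit point $x_*$ and a sequence $t_k \to \infty$ with $x(t_k) \to x_*$, whence $f(x_*) = f_\infty$ by continuity. Applying the \LGU{} at $x_*$ yields a neighborhood $U$, a constant $c > 0$, and $\mu \in [0,1)$ with $|f(y) - f_\infty|^\mu \le c\|\nabla f(y)\|$ on $U$. The key computation is to differentiate the auxiliary function $g(t) := (f(x(t)) - f_\infty)^{1-\mu}$, which is positive in the present case; whenever $x(t) \in U$ the inequality gives
\[
-\frac{d}{dt} g(t) = (1-\mu)\,(f(x(t)) - f_\infty)^{-\mu}\,\|\nabla f(x(t))\|^2 \ge \frac{1-\mu}{c}\,\|\nabla f(x(t))\| = \frac{1-\mu}{c}\,\|\dot{x}(t)\|,
\]
so the arc length of the trajectory is controlled by the decrease of $g$.

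The hard part is to upgrade this local estimate to genuine convergence, because the bound is valid only while $x(t)$ remains inside $U$. I would fix a radius $\rho$ with $\overline{B}(x_*,\rho) \subseteq U$ and choose $k$ so large that both $\|x(t_k) - x_*\| < \rho/2$ and $\frac{c}{1-\mu}\, g(t_k) < \rho/2$, which is possible since $x(t_k) \to x_*$ and $g(t_k) \to 0$. A continuity (trapping) argument then shows the trajectory never leaves $B(x_*,\rho)$ for $t \ge t_k$: if $\bar{t}$ were a first exit time, integrating the displayed inequality over $[t_k,\bar{t}]$ gives $\|x(\bar{t}) - x(t_k)\| \le \frac{c}{1-\mu}\, g(t_k) < \rho/2$, hence $\|x(\bar{t}) - x_*\| < \rho$, contradicting that $\bar{t}$ is an exit time. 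Consequently the estimate holds for all $t \ge t_k$, and $\int_{t_k}^\infty \|\dot{x}(t)\|\,dt \le \frac{c}{1-\mu}\, g(t_k) < \infty$. A trajectory of finite total length is Cauchy, so $x(t)$ converges to some $\bar{x}$, which must equal $x_*$ because $x(t_k) \to x_*$.

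Finally I would verify that $x_*$ is a critical point. Integrating $\frac{d}{dt} f(x(t)) = -\|\nabla f(x(t))\|^2$ yields $\int_0^\infty \|\nabla f(x(t))\|^2\,dt = f(x(0)) - f_\infty < \infty$, so $\|\nabla f(x(t_j))\| \to 0$ along some sequence $t_j \to \infty$; since $x(t_j) \to x_*$ and $\nabla f$ is continuous (in our application $f$ is even real-analytic), this forces $\nabla f(x_*) = 0$. I expect the trapping step to be the only genuinely delicate point, the rest being bookkeeping around the $g$-estimate.
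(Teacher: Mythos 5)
Your argument is correct and is precisely the classical \L{}ojasiewicz convergence proof: the descent identity, the gradient inequality at an $\omega$-limit point $x_*$, the Lyapunov function $(f(x(t))-f_\infty)^{1-\mu}$, a trapping argument to keep the trajectory inside the neighborhood where the inequality holds, and the resulting finite-arc-length (Cauchy) estimate. The paper does not reprove this statement itself; it invokes \cite[Theorem 3.1]{BRTW} resp.\ \cite[Theorem 2.2]{eins}, whose proofs follow exactly this route, so in substance your proposal coincides with the paper's approach. One small point to be aware of: in your last step you pass from $\nabla f(x(t_j))\to 0$ to $\nabla f(x_*)=0$ using continuity of $\nabla f$, which is slightly more than the stated hypothesis that $f$ be differentiable (continuity of $t\mapsto \nabla f(x(t))=-\dot x(t)$ along the trajectory does not by itself give continuity of $\nabla f$ at the limit point). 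This is harmless in context, since the theorem is only ever applied to twice continuously differentiable (indeed analytic) risk functionals, and the cited proofs use the same implicit regularity; if you want your write-up to be airtight as a standalone statement, assume $f\in C^1$.
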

In \cite[Theorem 3.1]{BRTW} it is required that $f$ is analytic. Inspecting the proof, however, shows that thanks to  Łojasiewicz's theorem \cite{Loja1}  it is enough that $f$  is a differentiable function which satisfies the \LGU{} at every point.



\subsection{Convergence of the gradient flow}

We consider linear convolutional networks of depth $N$ with a fixed architecture \archi{} and fixed training data $X$ and $Y$. We use the abbreviations $\Losskurz:=\Loss$ and $\pi:=\pi_{\archi}$ as well as $\ksum:=\sum_{i=1}^{N}k_i$.

We consider learning a convolutional network with parameters $\wall = (w^{(1)},\hdots,w^{(N)})$ via the gradient flow $\wallt = (w^{(1)}(t),\hdots,w^{(N)}(t))$ defined for some initialization $\wall_0$ via the ODE
\begin{align}
\label{eq_GradientSystemwi}
    \frac{d\wi(t)}{dt}=-\nabla_{\wi}\Losskurz\l\wNsv{t}\r \quad\text{with }\quad \wi(0)=\wi_0, \quad i\in\ul{N}.
\end{align}
Motivated by \Cref{LGIbeschrKonvKritPunkt}, in order to prove the convergence of \wallt{} to a critical point of \Losskurz, we aim at showing boundedness of \wallt. 
The so-called balancedness as stated in the next result from \cite{Geo} is a crucial ingredient for achieving this.
The original statement in \cite{Geo} considers the special case that all strides $s_i$ are $1$, but the proof, which we include for convenience, works also for general strides.

\begin{lemma}[{{\cite[Proposition 5.13]{Geo}}}]
\label{lem:Normdiffconst}
Let $\Losskurz$ be continuously differentiable and let $(\wNsv{t})$ be the solution vector of the gradient system \eqref{eq_GradientSystemwi} on an interval $I$. Then, for each pair $i,j\in\ul{N}$ and $t\in I$, the difference $\delta_{ij}(t):=\Vert \wi(t)\Vert^{2}_{2}-\Vert \wI{j}(t)\Vert^{2}_{2}$ is constant in $t$.
\end{lemma}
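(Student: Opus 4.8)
The plan is to show that the pairwise difference of squared norms $\delta_{ij}(t) = \Vert \wi(t)\Vert_2^2 - \Vert \wI{j}(t)\Vert_2^2$ has vanishing time derivative along the flow. Since $\delta_{ij}$ is manifestly continuous in $t$, showing $\frac{d}{dt}\delta_{ij}(t) = 0$ on the interval $I$ immediately gives that it is constant. First I would compute $\frac{d}{dt}\Vert \wi(t)\Vert_2^2 = 2 \langle \wi(t), \dot{\wi}(t)\rangle$, and then substitute the gradient flow equation \eqref{eq_GradientSystemwi} to obtain
\begin{align*}
    \dif{t}\Vert \wi(t)\Vert_2^2 = -2\,\big\langle \wi(t),\, \nabla_{\wi}\Losskurz\l\wNsv{t}\r\big\rangle.
\end{align*}
Thus it suffices to prove the key identity that the quantity $\langle \wi, \nabla_{\wi}\Losskurz(\wall)\rangle$ is independent of the index $i$, i.e.\ that $\langle \wi, \nabla_{\wi}\Losskurz(\wall)\rangle = \langle \wI{j}, \nabla_{\wj}\Losskurz(\wall)\rangle$ for every pair $i,j$; once this holds, the derivative of $\delta_{ij}$ is zero.

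The crux is therefore to understand the Euler-type homogeneity of $\Losskurz$ in each block of filter variables. The empirical risk factors as $\Losskurz(\wall) = \sum_{\ell} \ell\big(\Pi_{\archi}(\wall) x_\ell, y_\ell\big)$, and the key structural observation is that the map $\Pi_{\archi}$ is multilinear in the blocks $\wNs$: the product weight matrix $\prod_{i=1}^N \Pi_{(d_{i-1},d_i),k_i,s_i}(\wi)$ is linear in each individual filter $\wi$ separately, since each parameterization $\Pi_{(d_{i-1},d_i),k_i,s_i}$ is itself linear in $\wi$. Consequently, for each fixed $i$ the matrix $\Pi_{\archi}(\wall)$ is a linear (degree-one homogeneous) function of $\wi$. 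I would make this precise via the chain rule: writing $W = \Pi_{\archi}(\wall)$ and applying the scaling invariance
\begin{align*}
    \Pi_{\archi}(\wall) = \Pi_{\archi}\l\l \wI{1}, \ldots, \tfrac{1}{\kappa}\wi, \ldots, \kappa\,\wj, \ldots\r\r,
\end{align*}
noted earlier in the excerpt, which reflects exactly the block-wise multilinearity. Differentiating this identity in $\kappa$ at $\kappa = 1$ yields
\begin{align*}
    -\big\langle \wi, \nabla_{\wi}\Losskurz(\wall)\big\rangle + \big\langle \wj, \nabla_{\wj}\Losskurz(\wall)\big\rangle = 0,
\end{align*}
which is precisely the index-independence needed.

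The main obstacle I anticipate is making the chain-rule computation of $\nabla_{\wi}\Losskurz$ fully rigorous through the composition $\wi \mapsto \Pi_{(d_{i-1},d_i),k_i,s_i}(\wi) \mapsto W \mapsto \ell(W x_\ell, y_\ell)$, keeping careful track of the linear dependence of $W$ on each filter block and verifying that the off-diagonal factors genuinely cancel in the difference. Rather than differentiate the invariance identity, an equivalent and perhaps cleaner route is to use Euler's homogeneity relation directly: since $W$ is homogeneous of degree one in $\wi$, the map $\wi \mapsto \ell(W x_\ell, y_\ell)$ composed with a linear function gives $\langle \wi, \nabla_{\wi}\Losskurz(\wall)\rangle = \sum_\ell \langle \nabla_z \ell(W x_\ell, y_\ell), W x_\ell\rangle$, and the right-hand side is visibly symmetric in $i$ because it depends only on the full product $W$ and not on which block we single out. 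Either way, the essential content is the separate degree-one homogeneity of the network map in each weight block, which is the convolutional analogue of the well-known balancedness property for fully connected linear networks.
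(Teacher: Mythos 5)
Your proposal is correct and is essentially the paper's own argument: the paper likewise differentiates the scaling invariance of $\Losskurz$ (in the form that $\Losskurz(w^{(1)},\ldots,\kappa\,\wi,\ldots,\tfrac{1}{\kappa}\wI{j},\ldots)$ is constant in $\kappa$) at $\kappa=1$ to obtain the identity $\langle \wi,\nabla_{\wi}\Losskurz(\wall)\rangle = \langle \wI{j},\nabla_{\wI{j}}\Losskurz(\wall)\rangle$, and then combines it with the flow equation \eqref{eq_GradientSystemwi} to conclude $\frac{d}{dt}\delta_{ij}(t)=0$. Your closing Euler-homogeneity remark is an equivalent repackaging of that same identity (degree-one homogeneity of the network map in each filter block), so no genuinely different route is involved.
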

\begin{proof}
Let for $i,j\in\ul{N}$ and $\kappa>0$
\begin{align*}
\tilde{\theta}_{ij}(\kappa,t):= & \left( \wI{1}(t)^\top,\ldots,\kappa \wi(t)^\top,\wI{i+1}(t)^\top,\ldots,\frac{1}{\kappa}\wI{j}(t)^\top,\right. \\
& \left. \;\;
\wI{j+1}(t)^\top,\ldots, \wI{N}(t)^\top\right)^\top. 
\end{align*}
Since the scaling parameter $\kappa$ has no influence on the final filter, we have 
$$
\dif{\kappa}\l\Losskurz\l\tilde{\theta}_{ij}(\kappa,t)\r\r\equiv 0 \qquad \mbox{ for all } i,j\in\ul{N}.
$$
In particular, we obtain
\begin{align}
\label{iszero}
\begin{aligned}
    0= &\l\dif{\kappa}\l\Losskurz\l\tilde{\theta}_{ij}(\kappa,t)\r\r\r\bigg|_{\kappa=1}\\
    = & \l\nabla\Losskurz\l\tilde{\theta}_{ij}(\kappa,t)\r\r^\top\Bigg|_{\kappa=1} \cdot \l\dif{\kappa}\tilde{\theta}_{ij}(\kappa,t)\r\Bigg|_{\kappa=1}\\
     = & \l\nabla\Losskurz\l\wall(t)\r\r^\top \cdot \l0,\ldots,0,\wi(t)^\top,0,\ldots,0,-\wI{j}(t)^\top,0\ldots\r^\top.
     \end{aligned}
\end{align}
Therefore, using also \eqref{eq_GradientSystemwi}, $\delta_{i,j}(t)=\Vert \wi(t)\Vert^{2}_{2}-\Vert \wI{j}(t)\Vert^{2}_{2}$ satisfies
\begin{align*}
   \dif{t} \delta_{ij}(t)&= \; 2\wi(t)^{\top}\cdot\dif{t}\wi(t)-2\wI{j}(t)^\top\cdot\dif{t}\wI{j}(t)\\
    &= \; 2\l0,\ldots,\wi(t)^{\top},0,\ldots,-\wI{j}(t)^{\top},0,\ldots\r\cdot\dif{t}\wall(t)\\
    & = 
    -2\l0,\ldots,\wi(t)^{\top},0,\ldots,-\wI{j}(t)^{\top},0,\ldots\r\cdot\nabla\Losskurz\l\wall\r = 0   
\end{align*}
This means that $\delta_{i,j}(t)$ is constant in $t$.
\end{proof}

The previous lemma implies that it is enough to show boundedness of $\wI{j}(t)$ for one value of $j$ in order to establish boundedness of all $w^{(1)}(t), \hdots, w^{(N)}(t)$.

The following theorem provides sufficient conditions on $\Losskurz$ ensuring  that the gradient flow \wallt{} defined via \eqref{eq_GradientSystemwi} converges to a critical point of \Losskurz. 

\begin{theorem}
\label{th_Main}
Suppose that  $\Losskurz$ is twice continuously differentiable and satisfies the \LGU{} on $\R^{\ksum}$. For a given initialization $\wall_0\in\R^{\ksum}$, consider 
the gradient system 
\begin{align}
\label{gradient flow}
\frac{d\wall(t)}{dt}=-\nabla\Losskurz\l\wall(t)\r \quad \text{with}\quad \wall(0)=\wall_0.
\end{align}
Then the following statements hold:
\begin{enumerate}
 \item There exists a maximal open interval $I$ with $0\in I$ and a (unique) 
 integral curve $\wallt: I\to \R^{\ksum}$ that solves  \eqref{gradient flow}. 
 \item For $I$ as above, suppose there exists a non-decreasing function $g:\R\to \R_{\geq 0}$ 
 that satisfies  
 \begin{align}
\label{eq_EndfilterBoundedbyL}
\Vert\pi(\wall)\Vert_1\leq g\l\Losskurz\l\wall\r\r \quad\mbox{ for all } \wall.
\end{align}
Then $\wall(t)$ is defined and bounded for all $t\geq 0$ and converges to a critical point of $\Losskurz$ as $t\to\infty$.
\end{enumerate}
\end{theorem}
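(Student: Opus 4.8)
The plan is to reduce the whole statement, via \Cref{LGIbeschrKonvKritPunkt}, to an a priori boundedness estimate for the flow; the only genuinely new ingredient is a uniform bound on the individual filters $\wI{i}(t)$, which I would obtain by feeding the polynomial factorization estimates of Section~\ref{sec:conv-poly} into the balancedness identity of \Cref{lem:Normdiffconst}.

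Statement~(1) is routine: since $\Losskurz\in C^2$, the vector field $-\nabla\Losskurz$ is $C^1$ and hence locally Lipschitz, so Picard--Lindel\"of together with the standard continuation theory for ODEs produces a unique maximal integral curve on an open interval $I\ni 0$. For~(2) I would start from the fact that $\Losskurz$ is non-increasing along the flow, $\dif{t}\Losskurz(\wall(t))=-\|\nabla\Losskurz(\wall(t))\|_2^2\le 0$, so that $\Losskurz(\wall(t))\le\Losskurz(\wall_0)$ on $I\cap[0,\infty)$. Combining \eqref{eq_EndfilterBoundedbyL} with the monotonicity of $g$ then yields a uniform bound on the \emph{final} filter, $\|\pi(\wall(t))\|_1\le g(\Losskurz(\wall_0))=:C$. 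The real task is to upgrade this to a uniform bound on the full tuple $\wall(t)$.

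To that end I pass to polynomials. Set $P_t:=p_{\k-1}(\pi(\wall(t)))$, so that $\triple{P_t}_1=\|\pi(\wall(t))\|_1\le C$ and, by \Cref{th_Productps}, $P_t=\prod_{i=1}^N\tilde{p}_{k_i,\textbf{s}}(\wI{i}(t))$ with $\triple{\tilde{p}_{k_i,\textbf{s}}(\wI{i}(t))}_1=\|\wI{i}(t)\|_1$. Applying \Cref{lem_PolyZerLinFak} to $P_t$ factors it into linear factors whose $1$-norms are all at most $C':=6(\k-1)\max\{C,1\}$. Since $\C[x]$ is a unique factorization domain, the roots of $P_t$ distribute among the factor polynomials $\tilde{p}_{k_i,\textbf{s}}(\wI{i}(t))$, so each of the latter is a scalar $\nu_i(t)$ times a product of at most $\k-1$ of these bounded linear factors, and comparing leading coefficients forces $\prod_{i=1}^N|\nu_i(t)|\le\max\{1,C\}$ uniformly in $t$. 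Submultiplicativity of the $1$-norm (\Cref{lem_NormSubmulti}) then gives $\|\wI{i}(t)\|_1\le|\nu_i(t)|\,(C')^{\k-1}$ for each $i$.

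The conclusion is forced by balancedness. If $\wall(t)$ were unbounded on its forward interval, then by \Cref{lem:Normdiffconst} the differences $\|\wI{i}(t)\|_2^2-\|\wI{j}(t)\|_2^2$ are constants fixed by $\wall_0$, so along any sequence on which one filter diverges \emph{all} filters diverge; consequently $|\nu_i(t)|\ge\|\wI{i}(t)\|_1/(C')^{\k-1}\ge\|\wI{i}(t)\|_2/(C')^{\k-1}\to\infty$ for every $i$, contradicting $\prod_i|\nu_i(t)|\le\max\{1,C\}$. Hence $\wall(t)$ remains bounded on $I\cap[0,\infty)$; a standard escape argument (a bounded maximal forward solution cannot have finite escape time) upgrades this to $[0,\infty)\subseteq I$, and \Cref{LGIbeschrKonvKritPunkt} then delivers convergence to a critical point. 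The step I expect to be the main obstacle is precisely this scaling bookkeeping: the decomposition of $v$ into filters is non-unique, and a priori the constraint $\prod_i|\nu_i|\le\max\{1,C\}$ would permit one filter to blow up while another collapses; only the conserved norm differences of \Cref{lem:Normdiffconst} exclude this. Making the factorization uniform also needs care in the degenerate case where a leading coefficient $\wI{i}_{k_i}(t)$ vanishes so that the degree of $P_t$ drops, in which event \Cref{lem_PolyZerLinFak} must be invoked with the true degree of $P_t$.
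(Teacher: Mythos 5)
Your proposal is correct, and up to the final step it follows the same route as the paper: point (1) via Picard--Lindel\"of; descent of $\Losskurz$ along the flow combined with \eqref{eq_EndfilterBoundedbyL} to get a uniform bound $T$ on $\Vert\pi(\wall(t))\Vert_1$; the factorization machinery of \Cref{th_Productps} and \Cref{lem_PolyZerLinFak} together with unique factorization in $\C[x]$ to write each layer polynomial as a scalar times a product of linear factors of $1$-norm at most $6T\k$; balancedness (\Cref{lem:Normdiffconst}); and finally \Cref{LGIbeschrKonvKritPunkt} plus the escape-time argument. Where you genuinely diverge is in how boundedness of the individual filters is extracted. The paper isolates this as \Cref{lem:bound-individual}: from $\prod_i\kappa_i=\sgn(a_l)$ it bounds the product $\prod_i\Vert\wI{i}\Vert_2^2$ by $(6T\k)^{2\k}$, observes that $\beta_1=\Vert\wI{1}\Vert_2^2$ is then a root of the explicit monic polynomial $z(z+\delta_1)\cdots(z+\delta_1+\dots+\delta_{N-1})-\mathrm{const}$, and invokes the root bound of \Cref{lem_rootsBound} to obtain an explicit constant $\tau(T,\k,\delta_1,\dots,\delta_{N-1})$; this is a statement about an arbitrary $\wall$ with bounded final filter, independent of the flow. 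You instead argue by contradiction: if the flow were unbounded, balancedness forces \emph{all} filters to diverge along a sequence, whence $|\nu_i|\geq\Vert\wI{i}\Vert_2/(C')^{\k-1}\to\infty$ for every $i$, contradicting $\prod_i|\nu_i|\leq\max\{1,C\}$. Your finish is more elementary (it never needs \Cref{lem_rootsBound} or the auxiliary polynomial in $z$) but purely qualitative, whereas the paper's version produces a reusable quantitative bound. Two remarks on your write-up: first, your bound $\prod_i|\nu_i(t)|\leq\max\{1,C\}$ is in fact exactly $1$ when $\deg P_t\geq 1$ and at most $C$ when $P_t$ is a nonzero constant, so it is correct and covers both cases uniformly; second, beyond the degree-drop issue you flag, there is the degenerate case $P_t\equiv 0$, in which no factorization into linear factors exists at all --- this is harmless in your scheme, because along the divergent sequence all filters are eventually nonzero, so $P_{t_n}=\prod_i\tilde{p}_i(t_n)\not\equiv 0$, but the point deserves a sentence (the paper treats $P_t\equiv 0$ explicitly inside \Cref{lem:bound-individual}, using that some $\beta_j=0$ and the conserved differences then bound everything).
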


The proof requires the following technical lemma.

\begin{lemma}\label{lem:bound-individual} Assume that $\wall = (w^{(1)},\hdots,w^{(n)})$ satisfies $\|\pi(\wall)\|_1 \leq T$ and set $\delta_i = \|w^{(i+1)}\|_2^2 - \|w^{(i)}\|_2^2$. Then there exists a constant $\tau = \tau(T, k_v, \delta_1,\hdots,\delta_{N-1})$ such that
\[
\|w^{(i)}\|_2^2 \leq \tau \quad \mbox{ for all } i \in [N].
\]
\end{lemma}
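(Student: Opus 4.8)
The plan is to translate everything into the polynomial picture of Proposition~\ref{th_Productps}, use Lemma~\ref{lem_PolyZerLinFak} to bound the \emph{product} of the individual filter norms, and then invoke the balancedness identity of Lemma~\ref{lem:Normdiffconst} to upgrade that product bound into a uniform bound on each single norm.

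First I would pass to polynomials. Writing $q_i := \tilde{p}_{k_i,\textbf{s}}(w^{(i)})$, $v := \pi(\wall)$ and $\lambda_i := \|w^{(i)}\|_2$, Proposition~\ref{th_Productps} gives the factorization $p_{k_v-1}(v) = \prod_{i=1}^N q_i$, where $\deg q_i = t_i = (k_i-1)\prod_{n<i}s_n$ and $\sum_i t_i = k_v - 1$ by Proposition~\ref{th_CompConv}. Moreover $\triple{p_{k_v-1}(v)}_1 = \|v\|_1 = \|\pi(\wall)\|_1 \le T$ and $\triple{q_i}_2 = \lambda_i$. Since $\lambda_i = \triple{q_i}_2 \le \triple{q_i}_1$, it suffices to control the product $\prod_i \triple{q_i}_1$.

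Second, I would bound this product via Lemma~\ref{lem_PolyZerLinFak}. Applied to $p_{k_v-1}(v)$ (degree $k_v-1$, $1$-norm $\le T$) it produces linear factors $u_1,\dots,u_{k_v-1}$ with $\triple{u_j}_1 \le 6(k_v-1)\max\{T,1\} =: L$ and $p_{k_v-1}(v) = \sgn(v_{k_v})\prod_j u_j$. As $\C[x]$ is a unique factorization domain and each $q_i$ divides $p_{k_v-1}(v)$, each $q_i$ equals a scalar $\gamma_i$ times the product of the $t_i$ linear factors attached to its roots; matching this with the full factorization forces $\prod_i \gamma_i = \sgn(v_{k_v})$, hence $\prod_i |\gamma_i| = 1$. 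Submultiplicativity of the $1$-norm (Lemma~\ref{lem_NormSubmulti}) then gives $\triple{q_i}_1 \le |\gamma_i| L^{t_i}$, and multiplying over $i$ yields $\prod_i \lambda_i \le \prod_i \triple{q_i}_1 \le L^{k_v-1} =: K$ (the case $k_v=1$, in which all filters are scalars, is immediate since then $\prod_i \lambda_i = \|\pi(\wall)\|_1 \le T$). The conceptual point is that $\|\pi(\wall)\|_1$ can only bound the \emph{product} of the $\lambda_i$ and never the individual factors, since $v = \pi(\wall)$ is invariant under the rescaling $w^{(i)} \mapsto \kappa w^{(i)}$, $w^{(j)} \mapsto \kappa^{-1} w^{(j)}$; this scaling degeneracy is exactly what balancedness will remove.

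Finally, I would combine this with Lemma~\ref{lem:Normdiffconst}, which gives $\lambda_i^2 = \lambda_1^2 + c_i$ with $c_i := \sum_{j=1}^{i-1}\delta_j$ fixed constants, so that $\max_i \lambda_i^2 - \min_\ell \lambda_\ell^2 = D$ where $D := \max_i c_i - \min_i c_i$ depends only on $\delta_1,\dots,\delta_{N-1}$. If some $\lambda_i = 0$ (equivalently $v = 0$) then every $\lambda_j^2 = c_j - c_i \le D$ and we are done; otherwise $\prod_i \lambda_i^2 \le K^2$ together with $\lambda_i^2 \ge \min_\ell \lambda_\ell^2$ forces $\min_\ell \lambda_\ell^2 \le K^{2/N}$, whence $\max_i \lambda_i^2 = \min_\ell \lambda_\ell^2 + D \le K^{2/N} + D =: \tau$, a constant depending only on $T$, $k_v$ and $\delta_1,\dots,\delta_{N-1}$. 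I expect the main obstacle to be the product bound in the second step: in general a divisor of a polynomial can have a much larger $1$-norm than the polynomial itself, and it is precisely Lemma~\ref{lem_PolyZerLinFak} together with the cancellation $\prod_i|\gamma_i| = 1$ that saves the estimate; once $\prod_i \lambda_i$ is controlled, the balancedness step is elementary.
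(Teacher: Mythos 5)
Your proposal is correct, and its first half coincides with the paper's own proof: the paper likewise passes to the polynomial picture of \Cref{th_Productps}, applies \Cref{lem_PolyZerLinFak} to the final-filter polynomial, distributes the resulting linear factors $u_j$ over the layer polynomials by unique factorization in $\C[x]$ (index sets $M_i$, scalars $\kappa_i$ with the same sign cancellation $\prod_i \kappa_i = \pm 1$), and controls $1$-norms via \Cref{lem_NormSubmulti}. The genuine difference is the endgame. The paper never isolates your product bound $\prod_i \lambda_i \leq K$ and then pigeonholes; instead, writing $\beta_i = \|w^{(i)}\|_2^2 = \beta_1 + \delta_1 + \cdots + \delta_{i-1}$, it observes that $\beta_1$ is a root of the monic polynomial
\begin{align*}
z(z+\delta_1)(z+\delta_1+\delta_2)\cdots(z+\delta_1+\cdots+\delta_{N-1})-\prod_{i=1}^{N}\bigtriple{\prod_{j\in M_i}u_j}_2^{2},
\end{align*}
bounds all coefficients of this polynomial (the constant term by $(6Tk_v)^{2k_v}$, the others in terms of the $\delta_i$), and invokes the root bound of \Cref{lem_rootsBound}. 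Your alternative --- $(\min_\ell \lambda_\ell^2)^N \leq \prod_i \lambda_i^2 \leq K^2$, hence $\min_\ell \lambda_\ell^2 \leq K^{2/N}$, hence $\max_i \lambda_i^2 = \min_\ell \lambda_\ell^2 + D \leq K^{2/N}+D$ --- is more elementary: it dispenses with \Cref{lem_rootsBound} entirely and makes the mechanism transparent (the data can only control the product of the layer norms; the differences $\delta_i$ upgrade a product bound to individual bounds). One attribution remark: within this static lemma the identity $\lambda_{i+1}^2 = \lambda_i^2 + \delta_i$ is just the definition of $\delta_i$; \Cref{lem:Normdiffconst} is only needed later, to know that the $\delta_i$ remain constant along the flow.

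One imprecision should be patched. You assert that $p_{k_v-1}(v)$ has degree exactly $k_v-1$ and that $\deg q_i = t_i$; in general these are only upper bounds, since leading filter entries may vanish, and the paper is careful to work with the true degree $l \leq k_v - 1$. For $1 \leq l \leq k_v-1$ your argument survives verbatim: \Cref{lem_PolyZerLinFak} yields $l$ linear factors, $\sum_i |M_i| = l$, and since $L \geq 1$ you still get $\prod_i \triple{q_i}_1 \leq L^l \leq L^{k_v-1}$. You do cover $v=0$ (via the case of some $\lambda_i = 0$) and $k_v = 1$, but you miss the remaining degenerate case $k_v>1$, $v \neq 0$, $l=0$, i.e.\ $v=(a_0,0,\dots,0)$ with $a_0 \neq 0$, where \Cref{lem_PolyZerLinFak} is not applicable at all. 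The case is harmless --- all $q_i$ are then nonzero constants, so $\prod_i \lambda_i = |a_0| = \|v\|_1 \leq T \leq K$ and your final step applies --- but it must be stated; the paper runs a separate case $l=0$ for exactly this reason.
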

\begin{proof} We consider the final filter $\w = \pi(\wall)$ and the corresponding polynomial $\w(x):=p_{\k-1}(\w)(x)$ of  degree $l \leq \k-1$ associated to it. Setting $a_{i-1}:=\w_i$ for $i\in\ul{l+1}$,  $\w(x)$ is given by
\begin{align*}
    \w(x)=a_l\cdot x^l+a_{l-1}\cdot x^{l-1}+\dots +a_1\cdot x+a_0.
\end{align*}
We first consider the case that the degree of the polynomial $\w$ satisfies $l\geq 1$.
Then  $a_l\not= 0$ and according to  \cref{lem_PolyZerLinFak} there exist linear factors $u_1,\ldots,u_l\in\C[x]_1$ with
\begin{align}
\label{uiKL4Tk2}
    \w(x)=\sgn(a_l)\cdot \prod_{i=1}^{l}u_i(x) \quad\quad\text{and}\quad\quad \triple{u_i}_1\leq 6Tl<6T\k\quad \text{for } i\in\ul{l},
\end{align}
i.e., $\w(x)$ can be decomposed into polynomials with bounded coefficients. According to  \Cref{th_Productps}, $\w(x)$ can also be written as the product $\Pi_{i=1}^N \Tildepi(x)$ of  the polynomials $\Tildepi(x):=\Tilde{p}_{k_i,\textbf{s}}(\wi)(x) $ with $\triple{\Tildepi}_1=\Vert \wi\Vert_1$. Thus, the set of roots of $\Tildepi(x)$ is also part of the set of roots of $\w(t,x)$. So for any $j\in\ul{l}$ there is at least one $i\in\ul{N}$, so that 
$u_j$ divides 
$\Tildepi(x)$ and by the unique prime factorization in $\mathbb C[x]_n$, it follows that there exist subsets $M_1,\ldots,M_N$  of $\ul{l}$ and real numbers $\kappa_1,\ldots,\kappa_N$ such that $M_{i_1}\cap M_{i_2}=\emptyset$ for $i_1\neq i_2$, $\bigcup_{i\in\ul{N}} M_i=\ul{l}$ and
\begin{align}   
\label{guiyuiug}
\Tildepi(x)=\kappa_i\prod_{j\in M_i}u_j(x). 
\end{align}
Note that $M_i=\emptyset$ for $i\in\ul{l}$ with $\deg_x(\Tildepi)=0$.
It follows that 
\begin{align*} \w(x)&=\prod_{i=1}^{N}\Tildepi(x)=\prod_{i=1}^{N}\l\kappa_i\prod_{j\in M_i}u_j(x)\r=\l\prod_{i=1}^{N}\kappa_i\r\cdot\l\prod_{j\in \ul{l}}u_j(x)\r.
\end{align*}
By \eqref{uiKL4Tk2} this implies that
\begin{align*}
 \prod_{i=1}^{N}\kappa_i =\sgn(a_l).
\end{align*}
Let $\beta_i:=\Vert \wi\Vert^2_{2}=\triple{\Tildepi}_2^2$ for $i\in\ul{N}$. Then $\beta_{i+1}=\delta_i+\beta_i$ for $i\in\ul{N-1}$ and by \eqref{guiyuiug}
\begin{align*}
&\beta_1\cdot\ldots\cdot\beta_N
=\underbrace{(\kappa_1\cdot\ldots\cdot\kappa_N)^2}_{=1}\prod_{i=1}^{N}\bigtriple{\prod_{j\in M_i}u_j}_2^{2},
\end{align*}
which is equivalent to
\begin{align*}  
   \beta_1(\beta_1+\delta_1)(\beta_1+\delta_1+\delta_2)\cdot\ldots\cdot(\beta_1+\delta_1+\ldots+\delta_{N-1})-\prod_{i=1}^{N}\bigtriple{ \prod_{j\in M_i}u_j}_2^{2} = 0.
\end{align*}
Thus, $\beta_1$ is a root of the polynomial
\begin{align*}
    z(z+\delta_1)(z+\delta_1+\delta_2)\cdot\ldots\cdot(z+\delta_1+\ldots+\delta_{N-1})-\prod_{i=1}^{N}\bigtriple{\prod_{j\in M_i}u_j}_2^{2} .
\end{align*}
The leading coefficient of this polynomial is one. The absolute value of the coefficient of $z^{N-n}$ for $n\in\ul{N-1}$ satisfies
\begin{align*}
& \Big\vert\sum_{\substack{j_1<\ldots<j_n\\ j_k\in\ul{N}}}\l\sum_{i=1}^{j_1-1}\delta_i\r\cdot\ldots\cdot\l\sum_{i=1}^{j_n- 1}\delta_i\r\Big\vert\\
&\leq \sum_{\substack{j_1<\ldots<j_n\\ j_k\in\ul{N}}}(\max_{i\in\ul{N-1}} \vert\delta_i\vert)^n(j_1-1)\cdot\ldots\cdot (j_n-1).
\end{align*}
The constant term can be bounded as 
\begin{align*}
  \prod_{i=1}^{N}\bigtriple{\prod_{j\in M_i}u_j}_2^{2}\leq \prod_{i=1}^{N}\bigtriple{\prod_{j\in M_i}u_j}_1^{2}\leq \prod_{i=1}^{l}\bigtriple{u_i}_1^{2} <  (6T\k)^{2\k},
\end{align*}
where we used \eqref{uiKL4Tk2} and the fact that the $1$-norm is submultiplicative according to \Cref{lem_NormSubmulti}. 
 It follows from  \Cref{lem_rootsBound}
that $\beta_1$ is bounded by a constant that only depends on $\k,T,\delta_1,\ldots,\delta_N$ and since $\beta_{i+1}=\delta_i+\beta_i$ for $i\in \ul{N-1}$, it follows that all $\beta_i$ are bounded by a  constant $\tau=\tau(\k,T,\delta_1,\ldots,\delta_N)$ that only depends on $\k,T,\delta_1,\ldots,\delta_N$.


Let us now consider the case that $l=0$, i.e., that the polynomial $v$ is constant.
If the constant is zero, i.e, $\w(x)\equiv 0$ then at least one of the polynomials $\Tilde{p}_{j}$ in the factorization of $\w$ (see Proposition~\ref{th_Productps}) 
is zero, $\Tilde{p}_{j}(x)\equiv 0$.
Defining $\beta_i = \triple{\Tilde{p}_{i}}_2^2$ as above, it follows that  $\delta_j=\beta_{j+1}-\beta_{j}=\beta_{j+1}$ and, as a consequence 
\begin{align*}
    \beta_i\leq (N-1)\max_{k\in\ul{N-1}}\vert\delta_k\vert\ \quad \mbox{ for all } i \in [N]. 
\end{align*}
If $\w$ is a constant polynomial $\w \not\equiv 0$ it follows that all $\Tildepi(x)$ are constant as well. This allows to apply the modulus in each factor individually,
\begin{align*}
    \triple{\w}_2^2= \bigtriple{\prod_{i=1}^N \Tildepi}_2^2= \prod_{i=1}^N \triple{\Tildepi}_2^2= \prod_{i=1}^N\beta_i .
\end{align*}
Together with $\beta_{i+1}=\delta_i+\beta_i$ this yields
\begin{align*}
    \beta_1(\beta_1+\delta_1)(\beta_1+\delta_1+\delta_2)\cdot\ldots\cdot(\beta_1+\delta_1+\ldots+\delta_{N-1})-\triple{ \w}_2^2=0.
\intertext{Thus $\beta_1$ is a root of the polynomial}
z(z+\delta_1)(z+\delta_1+\delta_2)\cdot\ldots\cdot(z+\delta_1+\ldots+\delta_{N-1})-
\triple{ \w}_2^2 .
\end{align*}
By an analogous argument as in the case $l \geq 1$ together with \Cref{lem_rootsBound}, 
we obtain again that all 
$\beta_i$ are bounded by a  constant $\tau=\tau(\k,T,\delta_1,\ldots,\delta_N)$ that only depends on  $\k,T,\delta_1,\ldots,\delta_N$.
\end{proof}
\begin{proof}[Proof of Theorem~\ref{th_Main}]
The first point follows directly from standard existence results (Picard-Lindelöf Theorem) for ordinary differential equations. 
 Note that local Lipschitz continuity follows from the continuous differentiability of $\nabla\LosskurzN$. 

For the second point, 
we first show that the ODE system \eqref{gradient flow} has a solution that is defined for all $t\in\left[0,\infty \right)$, i.e. $\left[0,\infty \right)\subseteq I$. 
It is well known that if the maximal integral curve of a (locally Lipschitz) initial 
value problem starting in $t=0$ is not defined on $\left[ 0,\infty  \right)$, then the maximal integral curve leaves any compact set, in particular, the integral curve is then unbounded  on $\left[ 0,\infty \right)\cap I$.
Combining this observation with \Cref{LGIbeschrKonvKritPunkt}, we see that the claim of point 2)  follows once we show that the maximal integral curve stays bounded on $\left[0,\infty \right)\cap I$.

For the maximal integral curve $\wall(t)=\left( w^{(1)}(t)^\top,\ldots,w^{(N)} (t)^{\top}\right)^\top$ we define the associated final filter by $v(t):=\pi(\wall(t))$ with filter width \k and stride \s. Let $U=[0,\infty)\cap I$. We first show boundedness of $v(t)$ on $U$.

Note that $\Losskurz\l\wall\l{t}\r\r$ is decreasing in $t$ for $t\in  [0,\infty)\cap I$ since
\begin{align*}
    \frac{d}{dt}\Losskurz\l\wall\l t\r\r &=\l\nabla\Losskurz\l\wall\l t\r\r\r^\top\cdot \frac{d}{dt}\wall(t)
    = -\l\frac{d}{dt}\wall(t)\r^\top \cdot\frac{d}{dt}\wall(t) = -\Vert \frac{d}{dt}\wall(t)\Vert_2^2 \\ & \leq 0.
\end{align*}
Together with  condition \eqref{eq_EndfilterBoundedbyL} and the monotonicity of $g$ it follows for $t\in U$ that
\begin{align*}
    \Vert\w(t)\Vert_1\leq g\l\Losskurz\l\wall(t)\r\r\leq g\l\Losskurz\l\wall(0)\r\r =: T.
\end{align*}
In particular, $\w(t)$ is bounded on $U$. 
It follows from Lemma~\ref{lem:bound-individual} that there exists a constant $\tau = \tau(T, k_v, \delta_1,\hdots,\delta_{N-1})$ such that for all $t \in U$ and all $i \in [N]$
\[
\|w^{i}(t)\|_2^2 \leq \tau.
\]
In particular, $\wall(t)$ is bounded on $U$. Hence, $\wall(t)$ is defined for all $t \in [0,\infty)$ and as argued already above, it follows from Theorem~\ref{LGIbeschrKonvKritPunkt} that $\wall(t)$ converges to a critical point of $\Losskurz$.
\end{proof}

Next we provide some criteria that imply condition \eqref{eq_EndfilterBoundedbyL} of \Cref{th_Main}. We will use the entry-wise $\ell_1$-norm of a matrix Let $A=(a_1,\dots,a_m)\in \R^{l\times m}$ with columns $a_1,\dots,a_m$, defined as
$\Vert A\Vert_1:= \sum_{i=1}^l\sum_{j=1}^m \vert a_{ij}\vert=\sum_{j=1}^m \Vert a_{j}\Vert_1$. It is straightforward to verify that the matrix $\ell_1$-norm is submultiplicative, i.e., for matrices 
$A\in\R^{l\times m}$ and $B\in\R^{m\times n}$ it holds
\begin{align}\label{l1-submultiplicative}
        \Vert AB\Vert_1\leq \Vert A\Vert_1\Vert B\Vert_1.
\end{align}

Below we restrict ourselves to training data for which the input matrix $X$ has full rank.

\begin{lemma}
\label{Cor_EasyCond2}
 Let $X \in \R^{d_x \times m}, Y \in \R^{d_y \times m}$. Assume that $XX^\top$ has full rank and that there exists a monotonically increasing function $h: \R \to \R_{\geq 0}$ such that
 \begin{align}
 \label{eq_WBoundedbyEll}
\Vert W X - Y \Vert_1 \leq h\Bigg(\sum_{i=1}^{m}\ell(Wx_i,y_i)\Bigg) \quad  \mbox{ for all } W \in \R^{d_N \times d_0}.
\end{align}
Then \eqref{eq_EndfilterBoundedbyL} in \Cref{th_Main} holds.
\end{lemma}
\begin{proof} We proceed similarly as in the proof of \cite[Theorem 3.2]{BRTW}.
For some $\wall$ 
let $\W:=\Pi_{\archi}(\wall)$. The submultiplicity \eqref{l1-submultiplicative} of the matrix $\ell_1$-norm gives
    \begin{align*}
        \Vert \pi(\wall)\Vert_1 &=\frac{1}{d_N}\Vert V\Vert_1= \frac{1}{d_N}\Vert VXX^\top\l XX^\top\r^{-1}\Vert_1\\
        &\leq \frac{1}{d_N}\Vert V X\Vert_1\Vert X^\top\l XX^\top\r^{-1}\Vert_1\\
         & = \frac{1}{d_N}\Vert VX-Y+Y\Vert_1\Vert X^\top\l XX^\top\r^{-1}\Vert_1\\
        & \leq \frac{1}{d_N}\l\Vert VX-Y\Vert_1+\Vert Y\Vert_1\r\Vert X^\top\l XX^\top\r^{-1}\Vert_1\\
        &\leq \frac{1}{d_N}\l h\l\sum_{i=1}^{m}\ell(Vx_i,y_i)\r+\Vert Y\Vert_1\r\Vert X^\top\l XX^\top\r^{-1}\Vert_1\\\
        & = \frac{1}{d_N}\l h\l\Losskurz(\wall)\r+\Vert Y\Vert_1\r\Vert X^\top\l XX^\top\r^{-1}\Vert_1 =: g(\Losskurz(\wall)).
    \end{align*}
  Since $h$ is non-decreasing, also $g$ is non-decreasing which establishes condition \eqref{eq_EndfilterBoundedbyL} of Theorem~\ref{th_Main}.
\end{proof}

The next corollary slightly simplifies condition \eqref{eq_WBoundedbyEll}.

\begin{cor}
\label{Cor_EasierCond2}
Let $X \in \R^{d_x \times m}$. Assume that $XX^\top$ has full rank and that, for all $x \in \R^{d_x}, y \in \R^{d_y}$, 
\begin{align}\label{cond-easy}
     \Vert W x-y\Vert_1\leq \ell(Wx,y)+c \quad \mbox{ for all }W \in \R^{d_N \times d_0}.
 \end{align}
Then \eqref{eq_EndfilterBoundedbyL} in \Cref{th_Main} holds.
\end{cor}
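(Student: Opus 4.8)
The plan is to reduce this corollary to \Cref{Cor_EasyCond2} by showing that the pointwise bound \eqref{cond-easy} forces the aggregate bound \eqref{eq_WBoundedbyEll}; once \eqref{eq_WBoundedbyEll} is in hand, the conclusion \eqref{eq_EndfilterBoundedbyL} is exactly what \Cref{Cor_EasyCond2} delivers. The structural fact that makes this work is the column decomposition of the entrywise matrix $\ell_1$-norm recorded just before \Cref{Cor_EasyCond2}: since the $j$-th column of $WX - Y$ is precisely $Wx_j - y_j$, we have
\begin{align*}
\Vert WX - Y \Vert_1 = \sum_{j=1}^m \Vert Wx_j - y_j \Vert_1.
\end{align*}

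First I would apply the hypothesis \eqref{cond-easy} to each pair $(x_j, y_j)$ and sum the resulting inequalities over $j \in \{1,\dots,m\}$, which gives
\begin{align*}
\Vert WX - Y \Vert_1
&= \sum_{j=1}^m \Vert Wx_j - y_j \Vert_1
\leq \sum_{j=1}^m \big( \ell(Wx_j, y_j) + c \big) \\
&= \Bigg( \sum_{j=1}^m \ell(Wx_j, y_j) \Bigg) + mc.
\end{align*}
Then I would set $h(t) := \max\{t + mc,\, 0\}$. This is non-decreasing and maps $\R$ into $\R_{\geq 0}$, so it is an admissible choice in \Cref{Cor_EasyCond2}; moreover, for any $W$ the argument $t = \sum_{j=1}^m \ell(Wx_j, y_j)$ satisfies $h(t) \geq t + mc \geq \Vert WX - Y \Vert_1$ by the displayed chain, so condition \eqref{eq_WBoundedbyEll} holds with this $h$.

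With \eqref{eq_WBoundedbyEll} verified, applying \Cref{Cor_EasyCond2} with this $h$ immediately yields condition \eqref{eq_EndfilterBoundedbyL} of \Cref{th_Main}, completing the argument. I do not expect any genuine obstacle here: the proof is a routine reduction, and the only point deserving a moment of care is the passage from the per-sample bound to the matrix-form bound via the column decomposition of $\Vert \cdot \Vert_1$, together with absorbing the cumulative constant $mc$ into the monotone envelope $h$ so that it maps into $\R_{\geq 0}$.
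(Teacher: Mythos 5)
Your proposal is correct and follows essentially the same route as the paper: decompose $\Vert WX-Y\Vert_1$ into its columns, apply the per-sample bound \eqref{cond-easy} to each term, sum to get the aggregate bound \eqref{eq_WBoundedbyEll}, and invoke \Cref{Cor_EasyCond2} with the affine envelope $h(t)=t+cm$. Your replacement $h(t)=\max\{t+mc,0\}$ is a minor (and harmless) refinement ensuring $h$ maps into $\R_{\geq 0}$ as literally required by \Cref{Cor_EasyCond2}, but it does not change the argument.
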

\begin{proof}
    The statement follows directly from \Cref{Cor_EasyCond2} because
    \begin{align*}
        \Vert W X - Y \Vert_1 =\sum_{i=1}^{m} &\Vert W x_i-y_i\Vert_1 \leq \sum_{i=1}^{m} \l\ell(Wx_i,y_i)+c\r = \sum_{i=1}^{m}\ell(Wx_i,y_i)+c\cdot m.
    \end{align*}
    Hence, we have $\Vert W X - Y \Vert_1 \leq h\l \sum_{i=1}^{m}\ell(Wx_i,y_i)\r$ with the monotonically increasing function $h(t)=t+cm$.
\end{proof}
\bigskip

\subsection{Loss functions with convergent gradient flow}
\label{subsubsecBspLosses}

In this subsection, we discuss some examples of loss functionals  that fulfill the conditions of Theorem~\ref{th_Main}. For this purpose, the architecture \archi\  and the training data $X,Y$ are fixed. We use the same shorthand notation as before. 

Let us consider the $\ell_p$ loss given by $\ell_p(z,y):=\frac{1}{p}\Vert z-y\Vert_p^p = \frac{1}{p}\sum_{j=1}^{d_y} \vert z_j-y_j\vert^p$ for $p\geq 1$ and $z,y \in \R^{d_y}$. We also use the matrix-$p$-norm defined for $A=(a_1,\dots,a_m)\in \R^{l\times m}$ as $\Vert A\Vert_p:=\l \sum_{i=1}^m \Vert a_i\Vert_p^p\r^\frac{1}{p}$.
The loss functional for $\ell_p$ is given by
\begin{align*}
   \mathcal{L}_p(\wall):=\sum_{i=1}^m\ell_p(\Pi(\wall)\cdot x_i,y_i) = \frac{1}{p}\Vert \Pi(\wall)\cdot X-Y\Vert_p^p.
\end{align*}
For even $p$, $\mathcal{L}_p(\wall)$ is analytic (note that the vectors $\Pi_{\archi}(\wall)\cdot x_i$ have entries that are polynomial in the entries of  $\wall$, and that since $p$ is even the absolute values in the  definition of the $\ell_p$ loss can be omitted).  Thus for $p$ even, $\mathcal{L}_p(\wall)$    fulfills the \LGU{} by \L ojasiewz's Theorem.
For other values of $p\geq 1$ however, the absolute values in  definition of the $\ell_p$ loss prevents analyticity so that
it is not clear whether or under which conditions $\mathcal{L}_p$ satisfies the \LGU{} for non-even $p$.

The $\ell_1$ loss and the $\ell_2$ loss are loss functions that are frequently used in machine learning practice. The advantage of the $\ell_2$-loss is strict convexity. On the other hand, the $\ell_2$ loss function is sensitive to outliers. The $\ell_1$-loss is robust to outliers, but not differentiable. The best-known approach to combine the advantages of the two loss functions is the Huber loss, which, however, is not continuously differentiable. The alternative approaches   pseudo-Huber-loss, generalized-Huber-loss and $\log\text{-}\cosh$-loss are presented next.  These three loss functions behave  similarly to the $\ell_1$-loss for $\vert x\vert\to \infty$ and approach the $\ell_2$-loss for $\vert x\vert\to 0$, see \Cref{fig:enter-label} for an illustration. 

\begin{figure}[h!]
    \centering
    \includegraphics[width=0.75\linewidth]{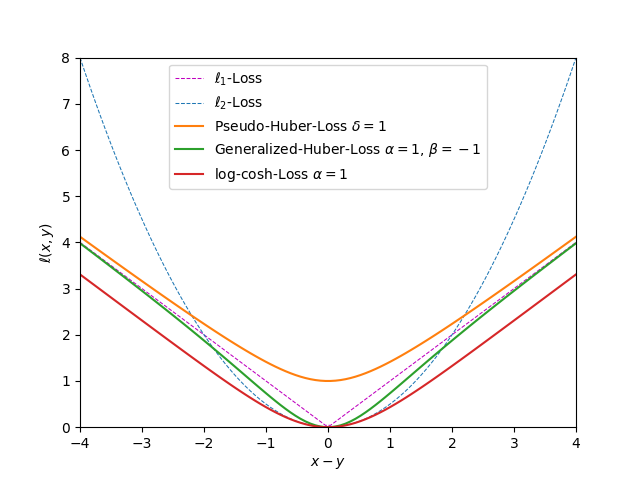}
    \caption{The figure shows the various loss functions for differences of $x-y\in [-4,4]$.}
    \label{fig:enter-label}
\end{figure}

The \textit{pseudo-Huber-loss} \cite{GHL} with respect to $\delta>0$ is defined by
\begin{align*}
    \ell_{PH}(x,y):=\delta\sum_{i=1}^n\sqrt{1+\frac{\l x_i-y_i\r^2}{\delta^2}} \qquad \text{for } x,y\in\R^{n}.
\end{align*}
For $\alpha>0$ and $\beta+2>0$ the \textit{generalized-Huber-loss }\cite{GHL}
is defined as
\begin{align*}
    \ell_{GH}(x,y):=\frac{1}{\alpha}\sum_{i=1}^n\log\l \e^{ \alpha\l x-y\r_i}+\e^{ -\alpha\l x-y\r_i}+\beta\r \qquad \text{for } x,y\in\R^{n}.
\end{align*}
The \textit{$\log$-$\cosh$-loss} is defined for $\alpha>0$ and $x,y\in\R^{n}$ as  
\begin{align*}
    \ell_{\log\text{-}\cosh}(x,y):=\frac{1}{\alpha}\sum_{i=1}^n\log\cosh\l\alpha\l x-y\r_i\r \qquad \text{for } x,y\in\R^{n}.
\end{align*}

The next theorem states convergence of the gradient flow for learning linear convolutional networks to a critical point for the loss functions defined above.
 \begin{theorem}
    \label{th_Main2}
        Assume that $XX^\top$ has full rank and let $\Losskurz(\wall):=\sum_{i=1}^m\ell(\Pi(\wall)\cdot x_i,y_i)$ be the loss functional with respect to one of the following loss functions:
        \begin{itemize}
             \item $\ell_p$-loss with $p\in\N$ even;
            \item pseudo-Huber-loss $\ell_{PH}$ with $\delta>0$;
            \item generalized-Huber-loss $\ell_{GH}$ with $\alpha>0$ and $\beta>-2$;
            \item $\log$-$\cosh$-loss $\ell_{\log\text{-}\cosh}(x,y)$ with $\alpha>0$.
        \end{itemize}
        Then the gradient flow $\wall(t)$   with respect to the gradient system \eqref{gradient flow}  is defined and  bounded for all $t\geq 0$ and $\wall(t)$ converges for $t\to\infty$ to a critical point of $\Losskurz.$
    \end{theorem}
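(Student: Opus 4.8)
The plan is to deduce the statement from \Cref{th_Main} by checking, for each of the four loss functions, its three hypotheses: that $\Losskurz$ is twice continuously differentiable, that it satisfies the \LGU{} on all of $\R^{\ksum}$, and that the final-filter bound \eqref{eq_EndfilterBoundedbyL} holds. For the last hypothesis I would not argue directly but reduce it, via \Cref{Cor_EasierCond2} (which is where the full-rank assumption on $XX^\top$ enters), to the purely pointwise scalar estimate
\begin{align}
\label{eq:plan-pointwise}
\Vert Wx-y\Vert_1 \leq \ell(Wx,y)+c \qquad \text{for all } W,x,y
\end{align}
for some constant $c\geq 0$ depending only on the loss. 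This splits the work into two independent checks per loss: analyticity (yielding both $C^2$ smoothness and the \LGU{}) and the inequality \eqref{eq:plan-pointwise}.

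For the regularity check I would establish analyticity of $\Losskurz$ and then invoke \L{}ojasiewicz's theorem, which guarantees the \LGU{} at every point; analyticity also supplies the required $C^2$ regularity. Since $\wall\mapsto\Pi(\wall)x_i$ is polynomial in the entries of $\wall$, it suffices that each loss, viewed as a scalar function of the residual, is real-analytic on all of $\R$. For even $p$ the $\ell_p$ loss is a polynomial in the residual (the absolute values disappear); for $\ell_{PH}$ the radicand $1+(Wx-y)_i^2/\delta^2\geq 1$ stays bounded away from $0$; for $\ell_{\log\text{-}\cosh}$ one has $\cosh\geq 1>0$; and for $\ell_{GH}$ the argument of the logarithm satisfies $\e^{\alpha t}+\e^{-\alpha t}+\beta\geq 2+\beta>0$ \emph{precisely} because $\beta>-2$. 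In each case the outer function is analytic on the range of the inner polynomial, so the composition is real-analytic.

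It remains to verify \eqref{eq:plan-pointwise}. Since each loss is a sum over the $d_y$ coordinates of a scalar profile $\psi$ applied to the residual, it reduces to a one-dimensional inequality $|t|\leq\psi(t)+c'$ with $t=(Wx-y)_i$; summing over the coordinates then gives \eqref{eq:plan-pointwise} with $c=d_y\,c'$. For $\ell_{PH}$ this is immediate and sharp with $c'=0$, since $\delta\sqrt{1+t^2/\delta^2}=\sqrt{\delta^2+t^2}\geq|t|$. For the even $\ell_p$ loss it holds because $t\mapsto\tfrac{1}{p}|t|^p-|t|$ is continuous and tends to $+\infty$, hence is bounded below. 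For $\ell_{GH}$ and $\ell_{\log\text{-}\cosh}$ I would use a soft argument: the function $\psi(t)-|t|$ is continuous and even, and a short computation of its limit as $t\to+\infty$ (for instance $\tfrac{1}{\alpha}\log\cosh(\alpha t)-|t|\to-\tfrac{1}{\alpha}\log 2$) shows it converges to a finite value; a continuous function on $\R$ with finite limits at $\pm\infty$ is bounded, in particular bounded below, which produces the constant $c'$.

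The main obstacle is the lower bound $\psi(t)\geq|t|-c'$ for the generalized-Huber and log-cosh losses, where the inequality is not elementary term by term and one must control the gap between the loss and $|t|$ uniformly in $t$; the asymptotic/limit argument above is the cleanest route, as it avoids tracking explicit constants. The analyticity of $\ell_{GH}$ is the other delicate point, and it is exactly where the hypothesis $\beta>-2$ is needed to keep the logarithm away from its singularity. Once analyticity and \eqref{eq:plan-pointwise} are in hand for a given loss, \Cref{Cor_EasierCond2} delivers \eqref{eq_EndfilterBoundedbyL} and \Cref{th_Main} yields boundedness of $\wall(t)$ on $[0,\infty)$ together with convergence to a critical point of $\Losskurz$.
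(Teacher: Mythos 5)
Your proposal is correct and follows the paper's overall skeleton --- establish analyticity of $\Losskurz$ (hence the \LGU{} via \L{}ojasiewicz's theorem, plus $C^2$ regularity) and then verify condition \eqref{eq_EndfilterBoundedbyL} of \Cref{th_Main} --- but it verifies that condition by a genuinely different route for two of the four losses. For the even $\ell_p$ loss the paper does not use the pointwise reduction at all: it applies H\"older's inequality, $\Vert WX-Y\Vert_1 \leq (d_N m)^{1-1/p}\Vert WX-Y\Vert_p$, and feeds the resulting bound $h(x)=d_N\, m\, p^{1/p}x^{1/p}$ into the more general \Cref{Cor_EasyCond2}; your alternative --- the scalar bound $|t|\leq \tfrac{1}{p}|t|^p + c'$ (valid since $\tfrac{1}{p}|t|^p-|t|$ is continuous and tends to $+\infty$, hence bounded below) pushed through \Cref{Cor_EasierCond2} --- is equally valid and has the virtue of treating all four losses uniformly through the same corollary. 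For the generalized-Huber and $\log$-$\cosh$ losses the paper computes explicit constants (for $\beta\in(-2,0)$ a multiplicative splitting of the logarithm yielding the constant $\tfrac{d_N}{\alpha}\log\bigl(\tfrac{2}{2+\beta}\bigr)$, and for $\log$-$\cosh$ the identity $\ell_{GH}=\ell_{\log\text{-}\cosh}+\tfrac{d_N\log 2}{\alpha}$ at $\beta=0$), whereas your soft argument --- $\psi(t)-|t|$ is continuous, even, and has finite limits at $\pm\infty$, hence is bounded below --- reaches the same conclusion faster at the price of a non-explicit constant $c'$; your pseudo-Huber estimate coincides with the paper's. Both routes are complete: the paper's buys explicit constants, yours buys brevity and a uniform reduction of all cases to the pointwise criterion \eqref{cond-easy}.
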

    \begin{proof}
     We check the conditions of \Cref{th_Main}. All specified loss functions are themselves analytic as compositions of analytic functions. The function that maps $\wall$ for fixed $x\in\R^{d_0}$ to $\Pi(\wall)\cdot x$ represents componentwise a multivariate polynomial that depends on the entries of \wall and is therefore also analytic. Since sums of analytic functions are also analytic, \Losskurz\ is also analytic. Thus \Losskurz\ fulfills \LGU{} and is twice continuously differentiable. 
     
     It remains to check point (2) of \Cref{th_Main} for all listed loss functions. 
     We start with the $\ell_p$-loss. 
     For $x\in\R^{n}$ and $p\geq 1$ Hölder's inequality
     implies that $\Vert x\Vert_1 \leq n^{1-\frac{1}{p}}\Vert x\Vert_p$.
    It  follows that 
    \begin{align*}
        \Vert W X-Y\Vert_1
        &\leq (d_N\cdot m)^{1-\frac{1}{p}}\cdot\Vert W X-Y\Vert_p\\\
         &\leq d_N\cdot m\cdot p^\frac{1}{p}\cdot\l\frac{1}{p}\Vert W X-Y\Vert_p^p\r^\frac{1}{p}
         = h\l\sum_{i=1}^{m}\ell_p(Wx_i,y_i)\r,
    \end{align*}
    where $h(x):=d_N\cdot m\cdot p^\frac{1}{p}\cdot x^\frac{1}{p}$ is increasing for $x\geq0 $. It follows from Lemma~\ref{Cor_EasyCond2} that condition 2) of Theorem~\ref{th_Main} holds.

    For the remaining loss functions, we will use Corollary~\ref{Cor_EasierCond2}. 
    For the pseudo-Huber-loss and
    $W\in\R^{d_N\times d_0}$, we have
    \begin{align*}
        \Vert W x-y\Vert_1
        =\sum_{j=1}^{d_N} \vert \l Wx-y\r_j\vert
        =\delta \sum_{j=1}^{d_N} \sqrt{ \frac{\l\l Wx\r_j-y_j\r^2}{\delta^2}}
        \leq \ell_{PH}(Wx,y)
    \end{align*}
    for all $x\in \R^{d_0}$ and $y\in \R^{d_N}$. 

For the generalized-Huber-loss
and $W\in\R^{d_N\times d_0}$, we have 
    \begin{align*}
        \Vert W x-y\Vert_1
        &=\frac{1}{\alpha}\sum_{j=1}^{d_N} \alpha\vert \l Wx-y\r_j\vert
        =\frac{1}{\alpha} \sum_{j=1}^{d_N} \log\e^{\alpha\vert \l Wx-y\r_j\vert}\\\
       & \leq\frac{1}{\alpha} \sum_{j=1}^{d_N} \log\l \e^{\alpha \l Wx-y\r_j}+\e^{-\alpha \l Wx-y\r_j}\r .
    \end{align*}
    Since the logarithm is monotonically increasing, it follows directly for $\beta\geq0$ that 
    \begin{align}
    \label{glGH}
        \Vert W x-y\Vert_1\leq \frac{1}{\alpha} \sum_{j=1}^{d_N} \log\l \e^{\alpha \l Wx-y\r_j}+\e^{-\alpha \l Wx-y\r_j}\r
        \leq \ell_{GH}(Wx,y)
    \end{align}
    for all $x\in \R^{d_0}$ and $y\in \R^{d_N}$.
    For $\beta\in(-2,0)$ we obtain
    \begin{align*}
        \Vert W x-y\Vert_1&\leq \frac{1}{\alpha} \sum_{j=1}^{d_N} \log\l \e^{\alpha \l Wx-y\r_j}+\e^{-\alpha \l Wx-y\r_j} +\beta -\beta\r\\
            &=\frac{1}{\alpha} \sum_{j=1}^{d_N} \log\left(\Big(\e^{\alpha \l Wx-y\r_j}+\e^{-\alpha \l Wx-y\r_j} +\beta\Big)\right. \\
            &\qquad\qquad \quad
            \cdot\left.\left( 1 +\frac{-\beta}{\e^{\alpha \l Wx-y\r_j}+\e^{-\alpha \l Wx-y\r_j} +\beta}\right)\right)\\
            &=\frac{1}{\alpha} \sum_{j=1}^{d_N} \left( \log\Big(\e^{\alpha \l Wx-y\r_j}+\e^{-\alpha \l Wx-y\r_j} +\beta\big) \right. \\
            & \qquad\qquad \left.+\log\l 1 +\frac{-\beta}{\e^{\alpha \l Wx-y\r_j}+\e^{-\alpha \l Wx-y\r_j} +\beta}\r\right)\\
            &\stackrel{\beta<0}{\leq} \frac{1}{\alpha} \sum_{j=1}^{d_N} \l \log\Big(\e^{\alpha \l Wx-y\r_j}+\e^{-\alpha \l Wx-y\r_j} +\beta\Big)+\log\l 1 +\frac{-\beta}{2 +\beta}\r\r\\
            &= \frac{1}{\alpha} \sum_{j=1}^{d_N} \log\l\e^{\alpha \l Wx-y\r_j}+\e^{-\alpha \l Wx- y\r_j} +\beta\r+\frac{d_N}{\alpha} \cdot\log\l \frac{2}{2 +\beta}\r\\
            &= \ell_{GH}(Wx,y) + \frac{d_N}{\alpha} \cdot\log\l \frac{2}{2 +\beta}\r .
    \end{align*}
    Note that for $\beta=0$ the generalized-Huber-loss is related to the $\log$-$\cosh$-loss via $\ell_{GH}(Wx,y) = \ell_{\log\text{-}\cosh}(Wx,y)+\frac{d_N\log(2)}{\alpha}$. Therefore by the above, for $x\in\R^{d_0}$ and $y\in\R^{d_N}$ it holds
\begin{align*}
     \Vert W x-y\Vert_1 
     \leq \ell_{\log\text{-}\cosh}(Wx,y)+\frac{d_N\log(2)}{\alpha}.
\end{align*}
    Hence, we established \eqref{cond-easy} for the pseudo-Huber-loss, generalized-Huber-loss and the $\log$-$\cosh$-loss and the statement follows from \Cref{th_Main}.
    \end{proof}

    Note that the important question remains as to whether the critical points to which the gradient flow converges are also  minimizers of the functional and, if so, whether they are global. One result in this direction is Proposition 5.1 in \cite{Geo} which states that for architectures where $s=(1,\ldots , 1)$ and $\ell$ is convex, a critical point $\wall$ is a global minimum if the polynomials $\Tilde{p}_{k_1,\textbf{s}}(\wI{1})(x),\ldots ,\Tilde{p}_{k_N,\textbf{s}}(\wI{N})(x) $, see \eqref{eq_SpecialPolynomConstruction}, have no common roots with respect to $x$. Further investigations in this direction are postponed to future contributions.
 \printbibliography 
\section*{Appendix}
\addcontentsline{toc}{section}{\numberline{}{Appendix}} 
\label{Appendix}

\subsection{Proof of Proposition~\ref{th_CompConv}}
We proceed by induction on $N$. For an architecture 
\[
(\textbf{d},\textbf{k},\textbf{s})=((d_0,\ldots , d_N),(k_1,\ldots , k_N),(s_1,\ldots , s_N)),
\]
let the mappings $\alpha_i: \R^{d_{i-1}}\to\R^{d_i}$ be convolutions with filters $\wi\in\R^{k_i}$ for $i\in\ul{N}$.
For $N=1$ the statement is trivial.
Assume that the statement is satisfied for $N-1\in\N$. Then
\begin{align*}
    (\alpha_{N}\circ\alpha_{N-1}\circ\ldots\circ\alpha_{1})(x)= (\alpha_{N}\circ\tilde{\alpha}_{N-1})(x),
\end{align*}
where according to the induction assumption $\tilde{\alpha}_{N-1}$ is a convolution with filter width $\tilde{k}:=k_1+\sum_{i=2}^{N-1}(k_i-1)\prod_{m=1}^{i-1}s_m$ and stride $\tilde{s}:=\prod_{i=1}^{N-1} s_i$. Define $\tilde{\w}\in\R^{\tilde{k}}$ as the filter associated with $\tilde{\alpha}_{N-1}$. 
Then we have
\begin{align*}
 & (\alpha_{N}\circ\tilde{\alpha}_{N-1})(x)_j =\sum_{n=1}^{k_{N}}w^{(N)}_{n}\cdot \tilde{\alpha}_{N- 1}(x)_{(j-1)\cdot s_{N}+n}  \\
&=\sum_{n=1}^{k_{N}}w^{(N)}_{n}\cdot\sum_{l=1}^{\tilde{k}}\tilde{\w}_{l} \cdot x_{((j-1)\cdot s_{N}+n-1)\cdot \tilde{s}+l}\\
 &=\sum_{n=1}^{k_{N}}\sum_{l=1}^{\tilde{k}}w^{(N)}_{n}\cdot \tilde{\w}_{l} \cdot x_{(j-1)\cdot s_{N}\cdot \tilde{s}+(n-1)\cdot \tilde{s}+l}
  = \sum_{m=1}^{(k_{N}-1)\cdot \tilde{s}+\tilde{k}}\w_{m}\cdot x_{(j-1)\cdot s_{N}\cdot \tilde{s}+m}.
\end{align*}
Thus, $\alpha_{N}\circ\tilde{\alpha}_{N-1}$ is a convolution with filter width $\k=(k_{N}-1)\cdot \tilde{s}+\tilde{k}=k_1+\sum_{i=2}^{N}(k_i-1)\prod_{m=1}^{i-1}s_m\; $, stride $\s=s_{N}\cdot \tilde{s}=\prod_{i=1}^{N} s_i\;$ and
\begin{align}
\label{eq_endfilter}
    \w_{m} = \sum_{\substack{n\in \ul{k_N} ,\, l\in\ul{\tilde{k}}\\\ (n-1)\cdot \tilde{s}+l=m}} w^{(N)}_{n}\cdot \tilde{\w}_{l}.
\end{align}
This completes the proof.
\end{document}